\newcommand{\Z}{\mathbb{Z}}
\newcommand{\N}{\mathbb{N}}
\renewcommand{\S}{\mathbb{S}}
\newcommand{\R}{\mathbb{R}}
\newcommand{\Tr}[0]{\text{Tr}}
\newtheorem{AThm}{Theorem}
\newtheorem{Thm}{Theorem} 
\newtheorem{Quest}{Question}
\newtheorem{Lem}[Thm]{Lemma}
\newtheorem{Prop}[Thm]{Proposition}
\theoremstyle{definition}
\newtheorem{Def}[Thm]{Definition}
\newtheorem{Rem}[Thm]{Remark}
\newtheorem{Ex}[Thm]{Example}
\newtheorem{assum}[Thm]{Assumption}
\newcommand{\changelocaltocdepth}[1]{%
  \addtocontents{toc}{\protect\setcounter{tocdepth}{#1}}%
  \setcounter{tocdepth}{#1}%
}
\newcommand{\calH}{\mathcal{H}}
\newcommand{\calX}{\mathcal{X}}
\newcommand{\rad}{{\mathrm{rad}}}
\newcommand{\const}{{\mathrm{const}}}
\newcommand{\rdot}{\dot{r}}
\newcommand{\udot}{\dot{u}}
\newcommand{\xidot}{\dot{\xi}}
\newcommand{\etadot}{\dot{\eta}}
\newcommand{\gammadot}{\dot{\gamma}}
\newcommand{\thetadot}{\dot{\theta}}
\newcommand{\ydot}{\dot{y}}
\newcommand{\ytil}{\tilde{y}}
\newcommand{\Itilde}{\tilde{I}}
\newcommand{\ftilde}{{\tilde{f}}}
\newcommand{\Xtilde}{\tilde X}
\newcommand{\stilde}{\tilde s}
\newcommand{\gammabar}{\overline{\gamma}}
\newcommand{\eps}{\varepsilon}
\newcommand{\geucl}{g_{\mathrm{eucl}}}
\newcommand{\betaconic}{\beta_{\mathrm{conic}}}
\newcommand{\calN}{\mathcal{N}}
\newcommand{\dXtilde}{{\partial\Xtilde}}
\newcommand{\frakF}{\mathfrak{F}}
\newcommand{\frakFtilde}{\widetilde{\frakF}}
\newcommand{\expbar}{\overline{\exp}}
\newcommand\Tstrut{\rule{0pt}{2.6ex}}         
\title{Geodesics Orbiting a Singularity}
\subjclass[2020]{53C22, 53A99, 53D25\\ \indent \keywordsname : Geodesics, Conical Singularity, Cuspidal Singularity, Singular Hamiltonian Systems.}
\author{Daniel Grieser}
\address{Mathematisches Institut, Universit\"at Oldenburg}
\email{daniel.grieser@uni-oldenburg.de}
\author{J\o rgen Olsen Lye}
\address{Institut für Differentialgeometrie, Leibniz Universit\"at Hannover}
\email{joergen.lye@math.uni-hannover.de}
\date{\today}
\begin{document}

\pagestyle{plain}

\begin{abstract}
We study the behaviour of geodesics on a Riemannian manifold near a generalized conical or cuspidal singularity. We show that geodesics 
entering a small neighbourhood of the singularity either hit the singularity or approach it to a smallest distance $\delta$ and then move away from it, winding around the singularity a number of times. We study the limiting behaviour $\delta\to0$ in the second case. In the cuspidal case the number of windings goes to infinity as $\delta\to0$, and we compute the precise asymptotic behaviour of this number.
 The asymptotics have explicitly given leading term determined by the warping factor that describes the type of cuspidal singularity. We also discuss in some detail the relation between differential and metric notions of conical and cuspidal singularities.
\end{abstract}

\maketitle

\phantomsection

\keywords

\tableofcontents

\section{Introduction}

Geodesics in Riemannian manifolds are among the most fundamental objects of differential geometry. Besides their intrinsic interest as locally shortest curves, or as trajectories of a free particle, they are used to define normal coordinates, which are of great utility. Also, they play an essential role in studying solutions of the wave equation, whose singularities  propagate along geodesics (see \cite{Hor}, for instance). 

In this paper we study the behaviour of geodesics near an isolated conical or cuspidal singularity of a Riemannian space. The precise definition of such singularities is given below, but for a first illustration of our results consider the example of the surface in $\R^3$ generated by rotating the curve $x=z^2$, $z\geq0$, around the $z$-axis, see the bottom left picture in Figure \ref{fig:ConeCuspPlot}. It has a cuspidal singularity at the origin $p=0$.
\begin{figure}
\includegraphics[width=.48\textwidth]{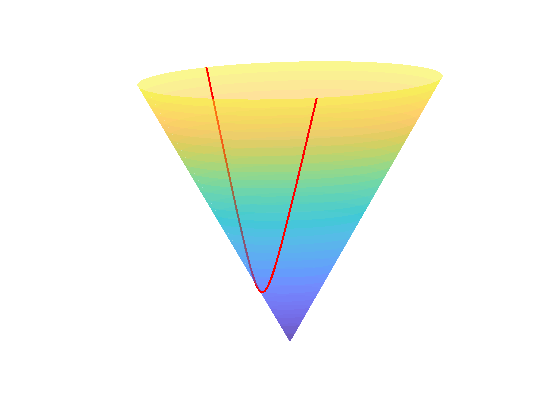}\hfill
    \includegraphics[width=.48\textwidth]{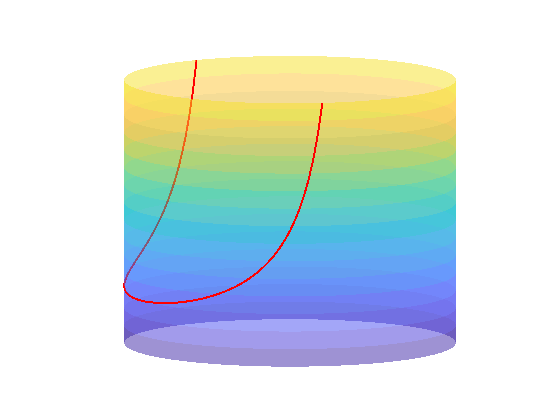}
    \\[\smallskipamount]
    \includegraphics[width=.48\textwidth]{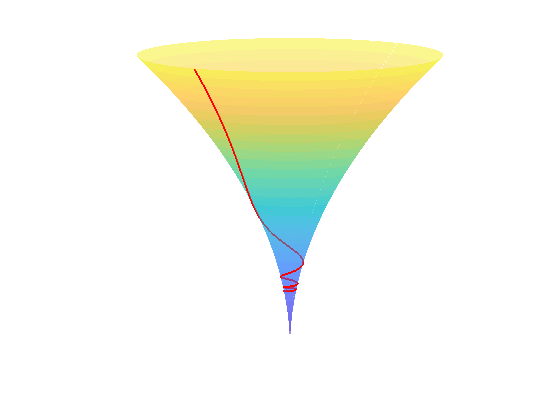}\hfill
    \includegraphics[width=.48\textwidth]{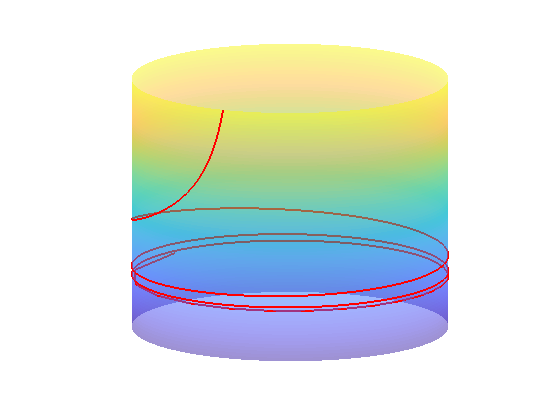} 
    \caption{Illustrations of Theorem \ref{thm:winding} for a conical metric $f(r)=r$ (above) and a cuspidal metric $f(r)=r^2$ (below). The right hand pictures show the product space $(0,R)\times Y$ while the left hand pictures illustrate the geometric situation. In all cases, $R=1.5$ and $\delta=0.3$. The $r$ resp.\ $z$ direction is upwards. Only the downward moving part of the geodesic is shown in the cuspidal case.}
    \label{fig:ConeCuspPlot}
\end{figure}

Consider the geodesics entering the neighbourhood $U=\{z<1\}$ of $p$ at any given point. 
It is quite obvious that one of these will hit $p$ after a finite time. Any other geodesic will move downward, reach a lowest point at $z=\delta$ say, then move up again, and finally leave $U$ (this can be seen using the classical Clairaut integral, for example, but we will prove it in greater generality). Inside $U$ this geodesic will wind around $p$ a number of times, as illustrated in Figure \ref{fig:ConeCuspPlot}.
In this special case, our results (see Theorem \ref{thm:winding}) imply that the number of windings is asymptotic
 to $\frac{\pi}{C} \delta^{-1}$ as $\delta\to0$, with $C=\int_{-\pi/2}^{\pi/2} \sqrt{\cos(\vartheta)}\, d\vartheta\approx 2.4$. 

Our setting and results are more general than this example in the following ways: we allow more general profile functions $x=f_0(z)$ instead of $z^2$, including $f_0(z)=z$ and $f_0(z)=e^{-1/z}$, for example. We also consider the natural generalization of rotation surfaces to product manifolds $(0,R)\times Y$ in any dimension with warped product metrics, where the cross section $Y$ is any closed Riemannian manifold (generalizing $Y=S^1$ in the surface case, where $f_0$ roughly corresponds to the warping function), and we also allow perturbations of such warped products. See below for the precise setting.

Our Main Theorems A, B, C are stated  in Subsection \ref{subsec:results} below. They
can be roughly summarized as follows.
\begin{Thm}[Rough summary of all results]
Consider a space with a conical or cuspidal singularity $p$, with cross section $Y$.
Any geodesic $\gamma$ in a neighbourhood of $p$ is either radial, i.e.\ it hits the singularity, or it approaches $p$, to a smallest distance $\delta$, and then moves away from $p$, while winding around $p$. For a winding geodesic $\gamma$ with small $\delta$ we have:
\begin{itemize}
 \item
The distance of $\gamma$ to $p$ behaves as for a radial geodesic up to an error of order $\delta$.
\item The $Y$-component of $\gamma$ closely follows a geodesic in $Y$. 
\item The length of this geodesic in $Y$ (generalizing the number of windings in the case $Y=S^1$) is asymptotic (as $\delta\to0$) to $C_f/f'(\delta)$, for a constant $C_f$ only depending on $f$.  
\end{itemize}
\end{Thm}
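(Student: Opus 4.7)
The plan is to work first in the exact warped product setting $g = dr^2 + f(r)^2 g_Y$ on $(0,R)\times Y$, and then reduce the perturbed case to it. In the warped product I would exploit two conservation laws. Writing a unit-speed geodesic as $\gamma(t) = (r(t), y(t))$, energy conservation gives $\dot r^2 + f(r)^2 |\dot y|_{g_Y}^2 = 1$, and the standard fact that the $Y$-component of a warped-product geodesic is, after the reparametrization $ds = dt/f(r(t))^2$, a geodesic in $Y$ yields the angular-momentum-type conservation law $L := f(r)^2 |\dot y|_{g_Y} = \text{const}$. Together these give the key radial ODE
\begin{equation*}
\dot r^2 = 1 - \frac{L^2}{f(r)^2}.
\end{equation*}

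The first two bullets follow almost immediately from this ODE. If $L = 0$ the geodesic is radial and hits $p$; otherwise $\dot r$ vanishes precisely at $r = \delta := f^{-1}(L)$, so the geodesic descends to $\delta$ and returns, winding around $p$. Since $1 - L^2/f(r)^2$ is close to $1$ whenever $f(r) \gg f(\delta)$, the profile $r(t)$ follows the radial solution $t\mapsto R - t$ with an error that, by a rescaling argument concentrating the discrepancy near the turning point, is of order $\delta$. The claim that $y$ closely tracks a geodesic of $Y$ is then a consequence of the reparametrization fact above; in the warped product case it is exact.

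For the quantitative length estimate, combining $|\dot y|_{g_Y} = L/f(r)^2$ with $dt = dr/|\dot r|$ and $L = f(\delta)$ yields
\begin{equation*}
\ell_Y = 2\int_\delta^R \frac{f(\delta)\, dr}{f(r)\sqrt{f(r)^2 - f(\delta)^2}}.
\end{equation*}
The substitution $u = f(r)/f(\delta)$ converts this to
\begin{equation*}
\ell_Y = 2\int_1^{f(R)/f(\delta)} \frac{du}{f'\!\left(f^{-1}(f(\delta)u)\right)\, u\sqrt{u^2-1}}.
\end{equation*}
Provided $f$ satisfies a suitable regular-variation hypothesis at $0$, the factor $f'(f^{-1}(f(\delta)u))$ can be factored to leading order as $f'(\delta)\cdot h(u)$ for a function $h$ depending only on the asymptotic type of $f$; letting $\delta\to0$ and verifying that the resulting integrand is integrable at $u=1$ and at $u=\infty$ then yields $\ell_Y \sim C_f/f'(\delta)$ with an explicitly identifiable constant $C_f$.

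The main obstacle will be handling perturbations of the warped product, where $L$ is no longer conserved and the $r$- and $y$-equations no longer decouple. My plan is to treat the perturbation as a forcing term, derive an evolution equation for $L$ along the geodesic, and bound its drift by a Gronwall-type argument. The delicate point is that in the cuspidal case the time spent in the neighbourhood is of order $\ell_Y\cdot f(\delta)$, which may grow as $\delta\to0$, so one must exploit that the perturbation vanishes at $p$ to sufficiently high order to keep the bound uniform. Once uniform control on $L$ along the orbit is in hand, the asymptotic $C_f/f'(\delta)$ transfers from the warped product to the perturbed setting.
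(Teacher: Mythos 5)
Your warped-product argument is, up to a change of variables, exactly the paper's: your conserved quantity $L=f(r)^2|\dot y|$ is the Clairaut integral $|\eta|=f(r)\cos\theta$ of Section \ref{Section:Clairaut}, your substitution $u=f(r)/f(\delta)$ is the paper's $u=1/\cos\theta$, and your integrand $F'(f(\delta)u)\,du/(u\sqrt{u^2-1})$ is precisely $F'(\rho_0/\cos\theta)\,d\theta$ from \eqref{eq:LengthMeasureWarped}; your ``regular-variation hypothesis'' is Assumption \ref{ass:non-oscill}, and the domination needed for the limit is supplied, as in the paper, by $F'$ being decreasing (convexity of $f$). Two points, however, need attention. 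First, the claim that $r(t)$ tracks the radial profile with error $O(\delta)$ is not obtained by a soft ``rescaling near the turning point'': the upper bound $r(t)\le|t|+\delta$ is trivial, but the matching lower bound $r(t)\ge|t|$ genuinely uses convexity of $f$; the paper proves it by introducing $u$ with $f(u)=f(r)\sin\theta$, deriving $\dot u f'(u)=f'(r)$ and concluding $\dot u\ge1$ from $f'(u)\le f'(r)$ (Lemma \ref{Lem:rBounds}). You should verify your heuristic survives in the cuspidal case, where $1-f(\delta)^2/f(r)^2$ degenerates more slowly near $r=\delta$.

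The genuine gap is in the perturbed case, which you only outline, and the outline points in the wrong direction. A Gronwall estimate in the time variable, rescued by assuming the perturbation vanishes to high order at $p$, is neither what is needed nor what the paper assumes (Assumption \ref{assump:reg} only requires $-2ch\le\partial_r h\le 2ch$ uniformly, with no vanishing at $r=0$). The key structural fact you are missing is that along a lifted geodesic the angular Hamiltonian $\calH_h$ changes only through the explicit $r$-dependence of $h$, so $\frac{d}{dt}\log|\eta|=\tfrac{\partial_r|\eta|}{|\eta|}\,\dot r$ and hence $\bigl|\frac{d}{dt}\log|\eta|\bigr|\le c\,|\dot r|$ (Lemma \ref{lem:gen warp estimates}). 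Integrating against $|\dot r|$ rather than $dt$ bounds the total drift of the Clairaut quantity by $c(r-\delta)\le cR$, uniformly in $\delta$, which is what makes the warped-product asymptotics transfer. Note also that your change of variables $dt=dr/|\dot r|$ presupposes that $r$ has a unique minimum and is monotone on either side of it; in the perturbed case this is not automatic and is exactly where the smallness condition $Rc<1$ enters, via $\dot\theta\ge(\tfrac1r-c)\cos\theta>0$ and hence $\ddot r>0$ (Lemma \ref{Lem:f'fEstimate}). Without these two ingredients your program for the generalized warped product does not close.
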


\subsection{Setting} \label{subsec:setting}
We now give a rough definition of the spaces that we consider in this paper. We refer to Section  \ref{Section:BasicGeom} for the precise definitions of a Riemannian space with isolated conical or cuspidal singularity along with technical assumptions. The key 
point is that the metric has generalized warped product form near the singularity, which we define as follows.

\begin{Def}
\label{def:intro warped product}
Let $Y$ be a compact manifold and $R>0$. A \textbf{generalized warped product metric} on $(0,R)\times Y$ is a smooth Riemannian metric of the form, with $r$ the coordinate on $(0,R)$,
\begin{equation}
\label{eq:Metric}
  g=dr^2+f(r)^2\, h_r\,,\qquad h_r = h(r,y)dy^2
\end{equation}
where $f:(0,R)\to(0,\infty)$ is continuously differentiable
and $h_r$ is a Riemannian metric\footnote{The notation $h(r,y)\,dy^2$ is supposed to be suggestive of the coordinate representation of $h_r$, which is $h_{ij}(r,y) \,dy^i dy^j$. We use the Einstein summation convention.} on $Y$ 
for each $r\in [0,R)$, depending continuously differentiably on $r$. The function $f$ is called the \textbf{warping function}.
If $h$ does not depend on $r$ then $g$ is called a \textbf{warped product metric}. 
\end{Def}

Of course, in \eqref{eq:Metric} the factor $f^2$ could be incorporated into $h$, but we will be interested in the case where $f(r)\to0$ as $r\to0$ while $h$ stays non-degenerate. 
In this case distances between points $(r,y)$ and $(r',y')$ tend to zero as $r\to0$, $r'\to0$, so geometrically one may complete the metric at $r=0$ by adding a single point, which we call the singularity. We call the resulting metric space including the singularity a \textbf{Riemannian space with isolated singularity}.
 Figure \ref{fig:ConeCuspPlot} illustrates the idea. 
\begin{Def}
\label{def:intro cone cups}
In the setting of Definition \ref{def:intro warped product}, assume
\begin{quote}
the function $f$ extends to a continuously differentiable map 
\[f:[0,R)\to[0,\infty)\]
 which satisfies 
\begin{equation}
 \label{eqn:props f}
f(0)=0\,,\quad f \text{ convex}.
\end{equation} 
\end{quote}
If $f'(0)>0$ then we speak of a \textbf{conical} singularity, while if $f'(0)=0$ then we speak of a \textbf{cuspidal} singularity. 
\end{Def}
\begin{Rem}
The convexity of $f$ is not needed in the conical case if $f$ extends to a $C^2$-function $f:[0,R)\to [0,\infty)$. See Remark \ref{rem:sing types} (a)-(c) below. See Remark \ref{rem:sing types} (d) for some typical examples of warping functions covered by this work.
\end{Rem}
 
\subsection{Results} \label{subsec:results}
We give a detailed description of the geodesics in the pointed neighbourhood $(0,R)\times Y$ of the conical/cuspidal singularity. 
We write geodesics as  
$$\gamma:I\to (0,R)\times Y\,,\qquad \gamma(t)=(r(t),y(t))\,,$$
$I\subset\R$ an interval,
and assume unit speed, $\vert \gammadot\vert_g\equiv 1$, i.e.\ $\rdot^2 + f(r)^2 |\ydot|^2 \equiv 1$, where a dot always indicates the derivative with respect to $t$ and $|\ydot|$ is the length with respect to $h_r$.

First, we have the following dichotomy.
\begin{AThm}[Radial and winding geodesics]
 \label{thm:two kinds}
 Let $g$ be a generalized warped product metric on $(0,R)\times Y$ as in \eqref{eq:Metric}. Then any geodesic is of one of the following types.
\begin{description}
 \item[radial] $y$ is constant, and $\rdot$ is constant equal to $1$ or $-1$.
 \item[winding] $\ydot(t)\neq0$ for all $t$.
\end{description}
If the warping function satisfies \eqref{eqn:props f} and if $R>0$ is small enough, then any maximal winding geodesic 
 $\gamma:I\to (0,R)\times Y$ satisfies in addition:
\begin{enumerate}[label=(\alph*)]
\item The function $t\mapsto r(t)$ is strictly convex.
 \item
$\gamma$ has finite length, i.e.\ $I=(T_-,T_+)$ with $T_\pm\in\R$.
 \item $\gamma$ enters and leaves at $r=R$,  i.e.  $r(t)\to R$ as $t\to T_\pm$.
\end{enumerate} 
In the warped product case, $t\mapsto y(t)$ is a time reparametrised geodesic of $(Y,h)$.
\end{AThm}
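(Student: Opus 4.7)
\textit{Dichotomy.} I would extract everything from the Euler--Lagrange equations of the Lagrangian $L=\tfrac12(\rdot^2+f(r)^2 h_{ij}(r,y)\ydot^i\ydot^j)$. A direct check shows that for any $(r_0,y_0)$ and $\epsilon\in\{\pm1\}$, the curve $t\mapsto(r_0+\epsilon t,y_0)$ has unit $g$-speed and satisfies both the $r$- and $y$-components of the Euler--Lagrange system (each summand in the $y$-equation contains a factor of $\ydot$ or $\ddot y$, which vanishes). So radial curves are geodesics, and uniqueness of the geodesic flow implies that if $\ydot(t_0)=0$ at any point then $\gamma$ coincides with such a radial geodesic and $\rdot\equiv\pm1$. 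Otherwise $\ydot$ vanishes nowhere, and unit speed forces $\rdot^2<1$ throughout.

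\textit{Strict convexity of $r$ (part (a)).} The $r$-component of the Euler--Lagrange equations, combined with unit speed $f^2|\ydot|_{h_r}^2=1-\rdot^2$ and the abbreviation $\sigma:=\ydot/|\ydot|_{h_r}$, reads
\[
\ddot r = (1-\rdot^2)\Bigl[\frac{f'(r)}{f(r)} + \tfrac12(\partial_r h_r)(\sigma,\sigma)\Bigr].
\]
Convexity of $f$ with $f(0)=0$ forces $f'\ge 0$ and $f(r)=\int_0^r f'(s)\,ds\le rf'(r)$, hence $f'(r)/f(r)\ge 1/r$ on $(0,R]$. The correction $|(\partial_r h_r)(\sigma,\sigma)|$ is bounded uniformly on $[0,R]\times Y$ by the $C^1$-regularity of $h$ in $r$ and the compactness of $Y$; so for $R$ small enough the bracket is strictly positive. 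Since $1-\rdot^2>0$ for a winding geodesic, this gives $\ddot r>0$.

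\textit{Finite interval and exit at $R$ (parts (b) and (c)).} With $\ddot r>0$, $\rdot$ is strictly increasing on $I=(T_-,T_+)$ with range in $(-1,1)$, so the monotone limits $\rdot_\pm$ and $r_\pm$ at $T_\pm$ exist with $r_\pm\in[0,R]$. Standard ODE continuation inside the smooth manifold $(0,R)\times Y$ excludes $r_\pm\in(0,R)$, since otherwise $\gamma$ would remain in a compact subset and extend past $T_\pm$. To exclude $r_\pm=0$: if $r(t)\to 0$ at $T_\pm$ then convexity forces $r$ to be monotone there with the appropriate sign of $\rdot$, so $|\rdot_\pm|<1$ and $1-\rdot^2$ is bounded below near $T_\pm$; the estimate $\ddot r\gtrsim 1/r$, substituted via $dt=dr/\rdot$, produces $\int\ddot r\,dt=\infty$, contradicting $|\rdot|\le 1$. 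Hence $r_\pm=R$, proving (c); finiteness of $T_\pm$, hence (b), follows from a uniform lower bound $\ddot r\ge c_0>0$ on any set $\{r\le R-\eps\}$.

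\textit{Warped product case.} If $h_r\equiv h$, the $y$-Euler--Lagrange equations reduce to the pregeodesic equation $\nabla^h_{\ydot}\ydot=-2(f'(r)/f(r))\,\rdot\,\ydot$ on $(Y,h)$. The ansatz $y(t)=\bar y(s(t))$ with $\bar y$ a genuine $h$-geodesic reduces this to the scalar ODE $s''/s'=-2(f'/f)\rdot$, integrated as $s'(t)=Cf(r(t))^{-2}$; so $y(t)$ is indeed a time-reparametrised $h$-geodesic. The hardest step will be the exclusion of $r\to 0$ for a winding geodesic in the preceding paragraph --- this is where the radial/winding dichotomy does its real work --- and converting pointwise strict convexity into the uniform ODE bounds needed to pin down $T_\pm$; the other parts are essentially routine Euler--Lagrange and ODE-continuation bookkeeping.
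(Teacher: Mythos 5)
Your proposal is correct and follows essentially the same route as the paper: the paper works with the Hamiltonian formulation (tangency of the Hamilton field to $\{\eta=0\}$ for the dichotomy, and $\ddot r=\thetadot\cos\theta>0$ via $f'/f\geq 1/r$ together with the uniform bound on $\partial_r h$ and $R$ small), which is just the cotangent-side version of your Euler--Lagrange computation, and the warped-product reparametrisation argument is identical. The one place you add substance is the exclusion of $r\to 0$ via the divergent integral $\int\ddot r\,dt$, a step the paper dismisses as "easy" (its machinery would instead give $f(r)=|\eta|/\cos\theta$ bounded below along a winding geodesic); your argument is valid.
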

In the sequel, all geodesics will be assumed to be maximal, i.e.\ their domain $I$ cannot be enlarged.

We now give a more precise description of the  winding geodesics $\gamma=(r,y)$. By (a) and (c) above, $r$ assumes its minimum at a unique time, which we may and will always take to be $t=0$. We write $\delta=r(0)$ for the minimum. Since geodesics are uniquely determined by their initial point,
we get a parametrisation
\begin{equation}
\label{eqn:winding geod param} 
 (0,R) \times SY \to \{\text{winding geodesics}\}\,,\quad (\delta, y_0,v_0)\mapsto \gamma_{\delta,y_0,v_0}
\end{equation}
where $SY=\{(y,v)\in TY\,:\ |v|_{h_0}=1\}$ is the unit tangent bundle and $\gamma_{\delta,y_0,v_0}$ is the maximal geodesic starting at $(\delta,y_0)$ in direction $(0,v_0)\in T_{(\delta,y_0)}((0,R)\times Y)$.

The next two theorems describe the behaviour of  winding geodesics as $\delta\to0$. Theorem \ref{thm:radialpart} describes how the radial component $r_{\delta}$ behaves, including a comparison theorem. Theorem \ref{thm:winding} describes the $Y$ component $y_{\delta}$, in particular the asymptotics of its length.
\begin{AThm}[Radial component of geodesics]
 \label{thm:radialpart}
 Let $(X,d)$ be a Riemannian space with an isolated conical or cuspidal singularity, and assume $R$ is small enough so \eqref{eq:Rc} is satisfied.
\begin{enumerate}[label=(\alph*)]
 \item Let $\gamma_\delta = (r_\delta,y_\delta):I_\delta\to(0,R)\times Y$, $\delta\in(0,R)$, be any family of unit speed geodesics, with $\min_t r_\delta(t)=r_\delta(0)=\delta>0$, with maximal interval of existence $I_\delta$. 
 
 As $\delta\to 0$ we have $I_\delta\to(-R,R)$ (i.e.\ the endpoints of $I_\delta$ converge to $\pm R$)
   and 
 \[ r_\delta(t) \to |t| \text{ for all } t\in (-R,R)\,.\]
\item In the case of a warped product with convex warping function $f$, the following comparison principle holds. If $\gamma$, $\overline{\gamma}$ are two unit speed geodesics which both reach their lowest point at $t=0$, and if $r(0)<\overline{r}(0)$, then $r(t)<\overline{r}(t)$ for all time $t$. 
\end{enumerate}
 \end{AThm}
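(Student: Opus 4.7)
The proof splits naturally: first I establish (b) via the explicit Clairaut integral in the pure warped product setting, then I use the resulting inverse-function formula together with a perturbation argument to deduce (a) in the generalized warped product.

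For part (b), work in the pure warped product. By Theorem \ref{thm:two kinds}, the $Y$-component $y$ is a reparametrised geodesic of $(Y,h)$, and the Clairaut conservation law yields $L := f(r)^2|\ydot|_h$ constant along $\gamma$. Evaluating at the turning point $t = 0$, where $\rdot(0) = 0$ forces $|\ydot(0)|_h = 1/f(\delta)$, we obtain $L = f(\delta)$, hence $\rdot^2 = 1 - f(\delta)^2/f(r)^2$. Consequently, for $t > 0$ the time for $\gamma$ to reach radius $r$ is
\[
T(\delta, r) = \int_\delta^r \frac{f(u)\, du}{\sqrt{f(u)^2 - f(\delta)^2}}.
\]
The assertion $r(t) < \bar r(t)$ for $t > 0$ is equivalent to $T(\delta, r) > T(\bar\delta, r)$ for every $r \in (\bar\delta, R)$, which reduces to proving $\partial_\delta T < 0$. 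I substitute $v = f(u)$, writing $F := f^{-1}$ (concave since $f$ is convex with $f(0) = 0$), and integrate by parts against the antiderivative $\sqrt{v^2 - f(\delta)^2}$ to obtain
\[
T(\delta, r) = \frac{\sqrt{f(r)^2 - f(\delta)^2}}{f'(r)} - \int_{f(\delta)}^{f(r)} \sqrt{v^2 - f(\delta)^2}\, dF'(v),
\]
where $dF'$ is interpreted as the non-positive Stieltjes measure associated with the non-increasing function $F'$, and the boundary contribution at $v = f(\delta)$ vanishes. Differentiating in $\delta$ now produces a strictly negative boundary term $-f(\delta)f'(\delta)/[f'(r)\sqrt{f(r)^2 - f(\delta)^2}]$ plus a non-positive integral term, so $\partial_\delta T < 0$. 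Time-reversal symmetry extends the comparison to $t < 0$.

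For part (a), the upper bound $r_\delta(t) \leq \delta + |t|$ is immediate from $|\rdot| \leq 1$. In the pure warped product case, passing to the limit $\delta \to 0$ in the IBP formula gives
\[
\lim_{\delta \to 0} T(\delta, r) = \frac{f(r)}{f'(r)} - \int_0^{f(r)} v\, dF'(v) = r,
\]
the final equality being one further integration by parts using $F(0) = 0$ (valid in both the conical and cuspidal cases, since $v F'(v)\to 0$ as $v\to 0$). A continuity/subsequence argument applied to $T(\delta_n, r_{\delta_n}(t)) = t$ then yields the pointwise limit $r_\delta(t) \to |t|$; in fact the monotonicity from (b) pins $r_\delta(t)$ in the strip $|t| \leq r_\delta(t) \leq \delta + |t|$. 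For the generalized warped product, the hypothesis \eqref{eq:Rc} controls the deviation of $h_r$ from a fixed reference metric on $Y$, and this allows one to bound the defect in Clairaut's identity and run a Gronwall-type bootstrap reducing matters to the pure warped product case. The claim $I_\delta \to (-R, R)$ then follows: $r_\delta$ is strictly convex by Theorem \ref{thm:two kinds}(a), hence monotone on each side of the turning point with finite exit times $T_\pm$, and the upper and lower bounds on $r_\delta$ force $T_\pm \to \pm R$.

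The main obstacle is the perturbation step in (a). In the generalized warped product the Clairaut integral is only approximately conserved, so one has to bootstrap estimates on the angular component $f(r)|\ydot|_{h_r}$ against the radial ODE using \eqref{eq:Rc}, while handling the singular behaviour of the inversion integrand near the turning point $t=0$.
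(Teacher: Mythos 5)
Your treatment of the warped-product case is correct and takes a genuinely different route from ours. Where we introduce the auxiliary variable $u$ via $f(u)=f(r)\sin\theta$ (see \eqref{eqn:def rho u}) and derive $\udot f'(u)=f'(r)$, hence $\udot\ge 1$ by convexity, you instead invert the radial motion: Clairaut gives $\rdot^2=1-f(\delta)^2/f(r)^2$, hence the time-of-flight integral $T(\delta,\rho)$, and a Stieltjes integration by parts against the non-increasing $F'$ shows that $T$ is decreasing in $\delta$ with limit $\rho$; this yields the comparison principle (b) and the two-sided bound $|t|\le r_\delta(t)\le |t|+\delta$ in one stroke, which is a nice economy. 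Two small repairs: you need not differentiate the Stieltjes integral in $\delta$ (justifying differentiation under the singular kernel $(v^2-a^2)^{-1/2}$ against the measure $dF'$ is delicate when $f$ is only $C^1$); it is cleaner to note that both terms of your integrated-by-parts formula are monotone non-increasing in $a=f(\delta)$, the boundary term strictly so. Also the equivalence between $r(t)<\overline{r}(t)$ and $T(\delta,\rho)>T(\overline{\delta},\rho)$ needs the separate (trivial) case of times $t>0$ with $r(t)\le\overline{\delta}$, where $\overline{r}(t)>\overline{\delta}\ge r(t)$ by strict monotonicity of $\overline{r}$.

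The genuine gap is the one you flag yourself: part (a) is asserted for generalized warped products, and there your argument is a one-sentence sketch (``bound the defect in Clairaut's identity and run a Gronwall-type bootstrap''). This is not a routine step; it is where the proof does its real work. Once $|\eta|$ is no longer conserved your formula for $T(\delta,\rho)$ disappears, and the naive replacement $\rdot^2\ge 1-f(\delta)^2e^{2c(r-\delta)}/f(r)^2$ must be integrated through the square-root degeneracy at $r=\delta$, where the correction $e^{2c(r-\delta)}$ competes with $f(r)^2/f(\delta)^2$; controlling this requires the convexity input $f'/f\ge 1/r$ and the smallness condition \eqref{eq:Rc} in a quantitative way. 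Our route (Lemma \ref{Lem:rBounds}) avoids the time integral entirely: the perturbed identity $\udot f'(u)=f'(r)\bigl(1-\frac{\partial_r|\eta|}{f'(r)}\cos\theta\bigr)$, together with $\partial_r|\eta|\le ce^{cR}f(\delta)$ and $f'(r)\ge f'(\delta)$, gives $\udot\ge 1-Cf(\delta)/f'(\delta)\ge 1-C\delta$, hence $(1-C\delta)|t|\le r_\delta(t)$ --- and both the pointwise limit $r_\delta(t)\to|t|$ and the endpoint convergence $I_\delta\to(-R,R)$ rest on this lower bound. Until your bootstrap is carried out in comparable detail, part (a) is established only in the warped-product case.
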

 In fact, we have precise error estimates for the convergence in (a), see Lemma \ref{Lem:rBounds}. Note that combining (a) and (b) we obtain
 $$ r(t)> |t| \text{ for all }t $$
 in the warped product case, for any winding geodesic. This is not obvious even for a conical surface.
\medskip
 
 \begin{AThm}[Angular component of geodesics]
\label{thm:winding}
Assume the setup of Theorem \ref{thm:radialpart}, and let $\gamma_\delta=(r_\delta,y_\delta)$ be a family of geodesics as in part (a) of that theorem.
Assume that  the warping function $f$ satisfies the non-oscillation condition \eqref{eqn:F condition}.
\begin{enumerate}[label=(\alph*)]
 \item The length of the $Y$-projection of $\gamma_\delta$,
\[\ell(y_{\delta})\coloneqq \int_{I_\delta} \vert \ydot_{\delta}(t)\vert_{h}\, dt\] 
satisfies
\begin{equation}
 \label{eqn:bd length asymp}
 \ell(y_{\delta})\sim  \frac{C_f}{f'(\delta)},
\end{equation}
where $\sim$ means that the quotient of the left hand side by the right hand side tends to one as $\delta \to 0$ and
\[C_f\coloneqq \int_{-\frac{\pi}{2}}^{\frac{\pi}{2}} \mathfrak{F}\left(\frac{1}{\cos(\vartheta)}\right)\, d\vartheta\]
with $\mathfrak{F}$ defined in \eqref{eqn:F condition}. The constant $C_f$ satisfies 
\[2\leq C_f\leq \pi,\]
with $C_f=\pi$ in the conical case and $C_f\to 2$ for $f(x)=x^{\alpha}$ as $\alpha\to \infty$. The sharper the cusp, the smaller is $C_f$ (see \eqref{eqn:frakF comparison} for a precise statement). 
\item 
Suppose that $y_\delta(0)\to y_0$, $f(\delta)\ydot_\delta(0)\to v_0$ as $\delta\to0$ for some $(y_0,v_0)\in SY$. Let $\ytil:\Itilde\to Y$ be the unit speed geodesic in $(Y,h_0)$ satisfying $\ytil(0)=y_0$, $\dot\ytil(0)=v_0$, where
$$ \Itilde=(-\frac{\pi}{2},\frac{\pi}{2}) \text{ in the conical case with } f'(0)=1, \quad \Itilde=\R \text{ in the cuspidal case.}$$
Then $y_\delta$ converges after unit speed reparametrisation to $\ytil$, uniformly on compact subsets of $\Itilde$.
\end{enumerate}
\end{AThm}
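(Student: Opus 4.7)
The plan is to treat the warped product case first, where Clairaut-type conservation gives an explicit integral formula, and then reduce the generalized case by perturbation. In the warped product case ($h_r \equiv h_0$), Theorem \ref{thm:two kinds} asserts that $y(t)$ is a reparametrized geodesic of $(Y, h_0)$, and a direct computation with the Christoffel symbols of $g = dr^2 + f^2 h_0$ yields the conservation law $f(r(t))^2\,|\dot y(t)|_{h_0} \equiv L$ along the flow. The initial conditions $\dot r(0) = 0$ and unit speed force $L = f(\delta)$, hence
\[
|\dot y(t)|_{h_0} = \frac{f(\delta)}{f(r(t))^2}, \qquad \dot r(t)^2 = 1 - \frac{f(\delta)^2}{f(r(t))^2}.
\]
Switching variables via $dt = dr/|\dot r|$ and exploiting the reflection symmetry $r_\delta(-t) = r_\delta(t)$ gives
\[
\ell(y_\delta) = 2\int_{\delta}^{r_+(\delta)} \frac{f(\delta)\, dr}{f(r)\,\sqrt{f(r)^2 - f(\delta)^2}},
\]
with $r_+(\delta) := \lim_{t\to T_+}r_\delta(t) \to R$ by Theorem \ref{thm:radialpart}. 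The substitution $\cos\vartheta = f(\delta)/f(r)$ collapses this to
\[
\ell(y_\delta) = 2\int_{0}^{\vartheta_\delta}\frac{d\vartheta}{f'(r_\delta(\vartheta))}, \qquad r_\delta(\vartheta) := f^{-1}\bigl(f(\delta)/\cos\vartheta\bigr),
\]
where $\vartheta_\delta = \arccos(f(\delta)/f(r_+(\delta))) \to \pi/2$.

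Multiplying by $f'(\delta)$, the claim \eqref{eqn:bd length asymp} reduces to
\[
\lim_{\delta \to 0}\int_{0}^{\vartheta_\delta}\frac{f'(\delta)}{f'(r_\delta(\vartheta))}\,d\vartheta = \int_{0}^{\pi/2}\mathfrak{F}(1/\cos\vartheta)\,d\vartheta,
\]
where pointwise convergence of the integrand is exactly the non-oscillation condition \eqref{eqn:F condition} at $s = 1/\cos\vartheta$. To pass the limit under the integral sign, I would extract from \eqref{eqn:F condition} a uniform majorant for $f'(\delta)/f'(r_\delta(\vartheta))$; the trivial bound $\leq 1$ coming from monotonicity of $f'$ (convexity of $f$), combined with integrability of $\mathfrak{F}(1/\cos\vartheta)$ on $(0, \pi/2)$, are the natural ingredients. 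The bounds $2 \leq C_f \leq \pi$ then follow from $\cos\vartheta \leq \mathfrak{F}(1/\cos\vartheta) \leq 1$, with equality in the conical case where $f' \equiv f'(0)$ forces $\mathfrak{F} \equiv 1$, and the limit $C_f \to 2$ for $f(x) = x^\alpha$ from the explicit computation $\mathfrak{F}(s) = s^{1/\alpha - 1}$ yielding $\int \cos^{1 - 1/\alpha}\vartheta\, d\vartheta \to 2$ as $\alpha \to \infty$.

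For part (b) in the warped product case, the unit-speed reparametrization of $y_\delta$ is a $(Y, h_0)$-geodesic whose initial data
\[
\bigl(y_\delta(0),\;\dot y_\delta(0)/|\dot y_\delta(0)|_{h_0}\bigr) = \bigl(y_\delta(0),\;f(\delta)\,\dot y_\delta(0)\bigr)
\]
tends to $(y_0, v_0)$ by hypothesis. Continuous dependence of the $(Y, h_0)$-geodesic flow on initial data yields convergence to $\tilde y$ uniformly on compact subsets, and part (a) pins down the maximal domain $\tilde I$: the half-length equals $\lim_{\delta \to 0}\tfrac{1}{2}\ell(y_\delta)$, which tends to $\pi/2$ in the conical case (where $f'(\delta) \to 1$ and $C_f = \pi$) and to $+\infty$ in the cuspidal case (where $f'(\delta) \to 0$).

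To transfer everything to the generalized warped product, I would compare $\gamma_\delta$ with the geodesic of the frozen metric $dr^2 + f(r)^2 h_0$: since $h_r - h_0 = O(r)$ in $C^1$ and the geodesic lives in $\{r \leq R\}$, Gronwall-type estimates along $\gamma_\delta$ should show that $f(r)^2|\dot y|_{h_r}$ is conserved up to $O(\delta)$ errors, and that the unit-speed reparametrization of $y_\delta$ is an $O(\delta)$-perturbation of a $(Y, h_0)$-geodesic. The main obstacle I expect is quantitative: the dominated-convergence bound of the second paragraph must be strong enough to absorb these $O(\delta)$ perturbations uniformly in $\vartheta \in (0, \vartheta_\delta)$, which in practice demands a quantitative rate in \eqref{eqn:F condition} beyond mere pointwise convergence of the quotient $f'(\delta)/f'(r_\delta(\vartheta))$.
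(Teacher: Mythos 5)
Your warped-product treatment of part (a) is essentially the paper's argument in different clothes: the substitution $\cos\vartheta=f(\delta)/f(r)$ produces exactly the identity $|\ydot|\,dt=F'\bigl(\rho_0/\cos\vartheta\bigr)\,d\vartheta$ that the paper obtains by parametrising with the Clairaut angle, and the dominated-convergence step with majorant $1$ (from monotonicity of $F'$) is the same. Part (b) in the warped case via continuous dependence on initial data is also fine and slightly slicker than the paper's compactness argument. One caveat even here: the lower bound $\mathfrak{F}(\sigma)\geq 1/\sigma$, hence $C_f\geq 2$, is asserted but not proved; the paper derives it from $\lim_{\epsilon\to 0}F(\sigma\epsilon)/F(\epsilon)=\sigma\mathfrak{F}(\sigma)$ (L'H\^{o}pital) together with $F$ increasing, and this does require an argument.

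The genuine gap is the generalized warped product case. Your plan --- Gronwall comparison of $\gamma_\delta$ with a geodesic of the frozen metric $dr^2+f^2h_0$ --- is not carried out, and the quantitative claim underlying it is misstated: by \eqref{eq:etaBounds} the Clairaut quantity $|\eta|=f(r)^2|\ydot|_{h_r}$ is conserved only up to a multiplicative factor $e^{\pm c(r-\delta)}$, i.e.\ up to relative errors of order $r$, which grow to order $R$ along the geodesic, not $O(\delta)$. Your concluding worry --- that one needs a quantitative rate in \eqref{eqn:F condition} to absorb the perturbation uniformly in $\vartheta$ --- is a symptom of this route not closing, but the worry itself is unfounded. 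No rate is needed: the paper avoids trajectory comparison entirely and instead bounds the measure directly,
\[
\frac{F'(\rho)}{1+cF(\rho)}\,d\theta\ \leq\ |\ydot|\,dt\ \leq\ \frac{F'(\rho)}{1-cF(\rho)}\,d\theta,
\]
using \eqref{eqn:error est} and concavity of $F$; then for each \emph{fixed} $\theta$ one has $\rho\cos\theta/\rho_0\to1$ and $F(\rho)\to0$ by \eqref{eq:etaBounds}, so the locally uniform convergence already built into Assumption \ref{ass:non-oscill} gives $F'(\rho)/F'(\rho_0)\to\mathfrak{F}(1/\cos\theta)$ pointwise, and the integrands are uniformly bounded by $(1-cR)^{-1}$ thanks to \eqref{eq:Rc}, so dominated convergence finishes. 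Similarly for part (b): in the generalized case $\tilde{y}_\delta$ is not a geodesic of $(Y,h_0)$ for any $\delta>0$, so continuous dependence does not apply directly; the paper establishes uniform Lipschitz bounds on the rescaled pair $(\tilde{y}_\delta,\tilde{\eta}_\delta)$, extracts limits by Arzel\`a--Ascoli, and identifies the limit through the integral form of the equations. Either that, or a properly quantified version of your comparison, is needed to complete the proof.
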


More explicitly, the statement in (b) is: Define the rescaled time  $\tau$ by $\tau(0)=0$ and
\begin{equation}
 \frac{d\tau}{dt} = |\ydot_\delta|
\label{eq:dtau}
\end{equation}
Then for $\Itilde_\delta=\tau(I_\delta)$ and $\ytil_\delta:\Itilde_\delta\to Y$ 
defined by 
$\tilde{y}_{\delta}(\tau(t))=y_{\delta}(t)$ we have, as $\delta\to0$,
$$ \Itilde_\delta\to \Itilde\,, \quad\text{and}\quad \ytil_\delta \to \ytil \text{ uniformly on compact subsets of $\Itilde$.
}$$
The asymptotic length in \eqref{eqn:bd length asymp} behaves as follows, as $\delta\to0$:
$$ \frac{C_f}{f'(\delta)}\to \frac{\pi}{f'(0)} \ \text{ (conical case)},\quad \frac{C_f}{f'(\delta)}\to\infty \ \text{ (cuspidal case).}$$
This shows that parts (a) and (b) of the theorem are consistent, since $|\Itilde_\delta|=\ell(\gamma_\delta)$.

In particular, Theorem \ref{thm:winding}(b) implies that every compact segment of a geodesic on $Y$ (having length less than $\pi/f'(0)$ in the conical case) is the uniform limit of the $Y$-parts of some family $(\gamma_\delta)$ of geodesics on $X$.
\begin{Rem} \label{rem:main theorems}
\begin{enumerate}
 \item It is worth stressing that the constant on the right hand side of \eqref{eqn:bd length asymp} only depends on $f$, and not on $h$, the dimension, or any detailed description of $\gamma_{\delta}$. We discuss the function $\mathfrak{F}$ and the constant $C_f$ in more detail in Section \ref{Section:Discussion}, including bounds and several examples.
 \item In the case of $Y=S^1$ of length $2\pi$ Equation \eqref{eqn:bd length asymp} means that a  geodesic which almost hits the singularity winds roughly $\frac{C_f}{2\pi}\cdot \frac{1}{f'(\delta)}$ times around the singularity before leaving. See Figure \ref{fig:ConeCuspPlot} for a couple of illustrations.
\item The assumed convergence of the initial conditions of $y_\delta$ in part (b) of Theorem \ref{thm:winding} is always satisfied after passing to a subsequence, by compactness. 
\end{enumerate}
\end{Rem}

\subsection{Methods, outline of proofs}
We formulate the geodesic equations as a Hamiltonian system of first order ODEs. In the warped product case, the equations for $r$ completely decouple and depend only on $f$. The motion in the $r$-direction then determines the speed in the $Y$-direction via the unit speed condition. This allows us to study the asymptotic behaviour quite explicitly. In the generalized warped case, the equations of motion almost decouple. In particular, the leading order behaviour turns out to be the same as in the warped case, and we derive bounds which imply that the generalized warped case has the same asymptotic behaviour as the warped one.

Perhaps the most surprising of our results is the length asymptotics \eqref{eqn:bd length asymp}. Here is a quick rough explanation for the appearance of the term $f'(\delta)$, in the case of a cuspidal warped product metric: Denote by $\theta\in(-\frac\pi2,\frac\pi2)$ the angle between $\gammadot$ and the \lq circle\rq\ of latitude $r=$ const, for a given geodesic $\gamma=\gamma_\delta$. 
Then $|\gammadot|=1$ implies $\cos\theta=f(r)|\ydot|$, $\sin\theta=\rdot$. 
The well-known Clairaut relation (which we rederive) says that $f(r)\cos\theta$ is constant along $\gamma$. Differentiating it in $t$ we obtain $\thetadot=f'(r)|\ydot|$, or $|\ydot|dt = \frac1{f'(r)}d\theta$. The length of $\gamma$ is obtained by integrating this, and \eqref{eqn:bd length asymp} essentially follows from the fact that $r$ is on the order of $\delta$ when $\theta$ is bounded away from $\pi/2$.

The structure of the paper is as follows. We start by describing the geometry of the spaces we work with in some more detail in Section \ref{Section:BasicGeom}. We proceed by stating the geodesic equations as a Hamiltonian system in Section \ref{Section:Hamilton}. Here we also introduce several useful variables and deduce a Clairaut-like relation. Section \ref{Section:Winding} is the heart of the paper, where we analyse the equations of motion and prove Theorem \ref{thm:two kinds}, \ref{thm:radialpart} and  \ref{thm:winding}.  Section \ref{Section:Discussion} deals with the constant $C_f$ appearing in the length asymptotics \eqref{eqn:bd length asymp}, including explicit computations of it and the necessity of assuming convexity and that \eqref{eqn:F condition} exists. Section \ref{sec:singularities} relates our notion of conical/cuspidal spaces to metric-free definitions of such spaces. Here we also present some families of spaces for which our assumptions hold.

\subsection{Related work and further remarks}\label{subsec:rel work}
Our results are new only in the cuspidal case but give a unified treatment of conical and cuspidal case. The conical case was treated by Melrose and Wunsch in \cite[Definition 1.4, Lemma 1.5]{MeWu:Geo} in the context of wave propagation, after early work by Stone \cite{Sto:EMISP}.
The first author discusses conical metrics in detail in \cite{Gri:NDOCS}.

The family of geodesics hitting the singularity was analysed for a more general class of $k$-cuspidal (i.e.\ $f(r)=r^k$, $k\geq2$) metrics -- those arising from differential $k$-cuspidal singularities, see Subsection \ref{subsec:bl-up} and in particular \eqref{eqn:cusp metric general}  -- by Grandjean and the first author \cite{GrGr18}. By Theorem \ref{thm:two kinds}, the geodesics hitting the singularity are simply given by constant $y$, for generalized warped product metrics. In \cite{GrGr18} it is shown that metrics on $k$-cuspidal singularities are not of generalized warped product form (see Subsection 6.2 and in particular \eqref{eqn:cond warped cusp} in the present work), and that the family of geodesics hitting the singularity has a more complicated structure in this setting. 

Previously, Bernig and Lytchak \cite{BerLyt:TSGHLSS} obtained first order information for geodesics on general real algebraic sets $X\subset\R^n$, by showing that any geodesic reaching a singular point $p$ of $X$ in finite time must have a limit direction at $p$.

For more on warped products and geodesics, one can consult \cite[Section 7]{Negative} and \cite[Chapter 7]{SemiRiemann}.

\section{Generalized Warped Product Geometry}
\label{Section:BasicGeom}
We start by being a bit more precise about the family of metrics $h_r$ in our definition of generalized warped products. 

\begin{assum}[Regularity assumption] \label{assump:reg} The metrics $h_r$ in the generalized warped product metric of Definition \ref{def:intro warped product} are uniformly positive definite and depend uniformly $C^1$ on $r$. These uniformity conditions are equivalent to 
\begin{equation}
-2c h \leq \partial_r h \leq 2c h
\label{eq:drhBound}
\end{equation}
for all $r\in (0,R)$, for some constant $c\geq0$. 
\end{assum}
This implies that the family extends to a continuous family of metrics on $r\in[0,R]$.

Next, we introduce singular Riemannian spaces whose singularities are modelled on generalized warped products as defined above.
\begin{Def} \label{def:cone cusp}
A metric space $(X,d)$ is called a \textbf{Riemannian space with an isolated conical or cuspidal singularity at $p\in X$} if
\begin{itemize}
 \item
 $X\setminus\{p\}$ is a smooth manifold, and the metric $d$ is induced by a Riemannian metric $g_X$ on 
$X\setminus\{p\}$ ;
 \item there is a neighbourhood $U$ of $p$ in $X$, a number $R>0$ and a compact manifold $Y$, and a continuous map
 $$ \beta:[0,R) \times Y \to U $$
which sends $\{0\}\times Y$ to $p$ and restricts to an isometry
$\bigl((0,R) \times Y, g\bigr) \to  \bigl(U\setminus\{p\},g_X\bigr)$
 where $g$ is a generalized warped product metric as in \eqref{eq:Metric} and where $f$ satisfies the conditions of Definition \ref{def:intro cone cups}.
As in that definition, if $f'(0)>0$ then we speak of a \textbf{conical} singularity, while if $f'(0)=0$ then we speak of a \textbf{cuspidal} singularity. 
\end{itemize}

\end{Def}
See Section \ref{sec:singularities} for examples, in particular for the proof that surfaces of revolution as indicated above fit into this framework, and for the relation of this notion of conical/cuspidal singularity to other natural such notions.
\begin{Rem}
\mbox{}
 \label{rem:sing types}
\begin{enumerate}[label=(\alph*)]
 \item
 As remarked above, the warping function $f$ is not uniquely determined by the metric $g$: Replacing $f$ by $af$ and $h$ by $a^{-2}h$, where $a$ is positive and $C^1$ on $[0,R)$, yields the same metric. This is the only freedom, by the conditions on $h$. Thus, if we call two warping functions equivalent if they differ by such a factor $a$ then $g$ determines the equivalence class $[f]$ of $f$, and it makes sense to speak of a \textbf{singularity of type $[f]$}.

 \item
 If $f$ vanishes to finite order $k$ at zero and is $C^{k+1}$ on $[0,R)$ then it is equivalent to $f_k(r)=r^k$.
\item
We use convexity of $f$ in several arguments. 
However, if $f$ is non-convex but there is  an equivalent function $af$ which is convex
then it follows that Theorems \ref{thm:two kinds}, \ref{thm:radialpart}(a) hold verbatim and Theorem \ref{thm:winding} holds with $f$ replaced by $af$. For instance, this is true in the $C^2$ conical case.
Note that convexity of $f$ implies $f'(r)>0$ for $r>0$.

The example of Section \ref{subsec:why convexity} demonstrates that the asymptotics of Theorem C can change without convexity. Convexity of $f$ implies that the angle $\theta$ between a geodesic and the level sets $\{r\}\times Y$ increases in $r$, see \eqref{eqn:thetadot}.
See also Remark \ref{Rem:MeanConvex} for an interpretation of the convexity of $f$ in terms of curvature. 

\item Some classes of functions $f$ to have in mind are\footnote{The first and second family overlap when $\alpha=1=\mu$. All three families are convex for small values of $x$. More precisely, the second family is convex as long as $y=\ln\left(\frac{1}{x}\right)$ satisfies $\mu y^\mu-y-(\mu-1)\geq 0$. The third family is convex when $\sqrt[\beta]{\frac{\beta}{\beta+1}}\geq x$. Both can be achieved by shrinking $R$.} 
\begin{itemize}
\item $f(x)=x^{\alpha}$ for $\alpha\in [1,\infty)$. 
\item $f(x)=\exp\left(-\ln\left(\frac{1}{x}\right)^{\mu}\right)$ for $\mu\in [1,\infty)$.
\item $f(x)= \exp\left(-\frac{1}{x^\beta}\right)$ for $\beta\in (0,\infty)$. 
\end{itemize}
 \end{enumerate}
\end{Rem}

We will identify $U\setminus\{p\}$ with $(0,R)\times Y$. The map $\beta$ should be thought of as a generalization of polar coordinates: For $X=\R^2$ 
the map
$$\beta:[0,\infty)\times S^1\to \R^2\,,\quad (r,\phi)\mapsto (r\cos\phi,r\sin\phi) $$
is an isometry over $r>0$ for the metric $dr^2+r^2d\phi^2$ on $(0,\infty)\times S^1$ (where $S^1=\R/2\pi\Z$) and the Euclidean metric on $\R^2\setminus\{0\}$.
So $0\in\R^2$ can be considered a conical singularity in this sense, and the same is true for any smooth point of a Riemannian manifold (with $Y$ equal to the sphere). The space $[0,\infty)\times Y$ is sometimes called the \textbf{blow-up} of $X$ in $p$, and the map $\beta$ the \textbf{blow-down} map.

Note that, while $g$ is a Riemannian metric  in $r>0$, it is only positive semi-definite at $r=0$, with any two points at $r=0$ having distance zero with respect to $g$. This reflects the fact that the map $\beta$ crunches all points at $r=0$ to the single point $p$. The warping function determines the 'speed' at which the crunching happens as $r\to0$.
Also, note that the form \eqref{eq:Metric} of the metric implies that $r$ is the distance to the singularity $p$.

\begin{Rem} \label{rem:correct def}
There is no unique answer to the question what the 'correct' definition of a Riemannian space with an isolated singularity is. While our notion of singularity is quite general in terms of $f$ and $h_r$, it is somewhat restrictive in requiring that there are no mixed terms in the metric \eqref{eq:Metric}, i.e.\ that coordinates $r,y$ can be chosen so that the lines $y=$ const are perpendicular to the hypersurfaces $r=$ const. See Section \ref{section: non-warped example} for a natural example where this is not satisfied.
We leave it for future work to analyse the geodesic flow for some of these more general metrics.

\end{Rem}
For Theorem \ref{thm:winding}, we need $f$ to satisfy the following condition:

\begin{assum}[Non-oscillation condition]
\label{ass:non-oscill}
 For the inverse function $F\coloneqq f^{-1}$ the limit
\begin{equation}
  \label{eqn:F condition}
\mathfrak{F}(\sigma)\coloneqq \lim_{\epsilon\to 0} \frac{F'(\sigma \epsilon)}{F'(\epsilon)}
\end{equation}
 exists  for $\sigma\in [1,\infty)$, and the convergence is uniform on compact subsets.
\end{assum}

This condition is satisfied for the examples above. 
See Lemma \ref{Lem:FNonConv} for a family of cuspidal examples where it is not satisfied. Furthermore, we need
(except in Section \ref{Section:Hamilton})  to limit the variation of $h_r$ over the interval $(0,R)$:
\begin{assum}[Small perturbation condition]
The constant $c$ in  \eqref{eq:drhBound} can be chosen so that
\begin{equation}
Rc<1. 
\label{eq:Rc}
\end{equation}
\end{assum}
Of course this can be achieved by shrinking $R$.
The condition \eqref{eq:Rc} is used to prove \eqref{eqn:thetadot} below, which in turn gets used implicitly several times.

\begin{Rem}
\label{Rem:MeanConvex}
The small perturbation condition along with the convexity of $f$ has an interpretation in terms of the mean curvature of the level sets of $r$:
A standard computation shows that the mean curvature vector of
$\{r\}\times Y\subset (0,R)\times Y$  is $H=H_0\partial_r$ where
\[H_0 =-\dim(Y)\frac{f'}{f} - \frac{1}{2} \Tr_h(\partial_r h)\,.\]
By the bound \eqref{eq:drhBound}, we can bound  $H_0$ as
\[-\dim(Y)\left(\frac{f'}{f}+c\right)\leq H_0\leq -\dim(Y)\left(\frac{f'}{f}-c\right)\]
The convexity of $f$ and $f(0)=0$ imply $\frac{f'(r)}{f(r)}\geq \frac{1}{r}\geq \frac{1}{R}$ (see Lemma \ref{Lem:f'fEstimate}), so the small perturbation condition implies a negative upper bound for the scalar mean curvature, 
\[H_0\leq -\frac{\dim(Y)}{R}(1-Rc)<0.\]
We will not use this geometric interpretation directly.
\end{Rem}

\section{Geodesic equations}
\label{Section:Hamilton}
In this section we analyse the geodesic equations for generalized warped product metrics of the type \eqref{eq:Metric}, but without extra conditions on the warping function $f$.
\subsection{Review of the Hamiltonian approach}
We use the Hamiltonian description of the geodesic flow. Recall that this means that (constant speed) geodesics on a Riemannian manifold $(Y,h)$ are the projections to $Y$ of curves in the cotangent bundle $T^*Y$, which are the integral curves of the Hamiltonian vector field for the Hamilton function
$\calH_h:T^*Y\to\R$,
$$ \calH_h(y,\eta) = \frac12 |\eta|^2_{h_y} \,,\quad \eta \in T^*_yY $$
where by
$h_y$ we also denote the metric on $T^*_yY$ dual to the metric $h_y$ on $T_yY$. In coordinates, $|\eta|^2_{h_y} = h^{ij}(y)\eta_i\eta_j$ where $(h^{ij})$ is the inverse matrix of $(h_{ij})$. Often we write simply $|\eta|_h$ or $|\eta|$.
The Hamilton vector field is, in coordinates,
$$ \calX_h = \frac{\partial \calH_h}{\partial\eta}\partial_y - \frac{\partial\calH_h}{\partial y} \partial_\eta $$
so its integral curves, also called \textbf{lifted geodesics}, are solutions $t\mapsto (y(t),\eta(t))$ of the system of differential equations
$$ \ydot = \frac{\partial \calH_h}{\partial\eta}(y,\eta)\qquad \etadot = - \frac{\partial\calH_h}{\partial y} (y,\eta) \,.$$
Geodesics are then the $y(t)$ parts of solutions of this system.
It is a basic fact that the Hamiltonian function is constant along integral curves, i.e. $\calH_h(y(t),\eta(t))=$ const. 
Note that 
$$ \frac{\partial \calH_h}{\partial\eta} (y,\eta) = \eta^\sharp $$
where $\eta\mapsto\eta^\sharp$, $T^*Y\to TY$ is the isomorphism induced by the metric.
In coordinates, $(\eta^\sharp)^i =h^{ij} \eta_j$. 
So for a geodesic $t\mapsto y(t)$ its lift is the curve $(y(t),\eta(t))$ where $\eta^\sharp(t)=\ydot(t)$, and this explains the relationship of the Hamiltonian approach to geodesics (with lift in the cotangent bundle) to the more standard approach where the lift is the curve $(y(t),\ydot(t))$ in the tangent bundle.

\medskip

\subsection{Geodesic equations for generalized warped product metrics}
We apply this general discussion with $Y$ replaced by $(0,R)\times Y$, with the metric $g$ in \eqref{eq:Metric}. Points are denoted $(r,y)$, and the dual cotangent variables are denoted $(\xi,\eta)$, so $\xi\in\R$, $\eta\in\R^{n-1}$ in coordinates.
The Hamilton function is 
\begin{equation}
\mathcal{H}_g=\frac12\left(\xi^2 +\frac{\vert \eta\vert^2_h}{f(r)^2}\right)
\label{eq:Energy}
\end{equation}
As mentioned above, $\calH_g$ is constant along integral curves.
We will always consider unit speed geodesics, i.e.\ integral curves lying on the hypersurface $\calH_g=\frac12$, or
\begin{equation}
\label{eqn:constant energy}
1 = \xi^2 +  \frac{\vert \eta\vert^2_h}{f(r)^2}.
\end{equation}
We calculate the Hamilton vector field for $\calH_g$. Since $\calH_g$ depends on $y,\eta$ only through $|\eta|^2_h$ we have
\begin{equation} \label{eqn:geod field}
 \calX_g = \calX_\rad + \frac1{f(r)^2} \calX_h
\end{equation}
where $\calX_\rad= \frac{\partial \calH_g}{\partial\xi}\partial_r - \frac{\partial \calH_g}{\partial r}\partial_\xi$ governs the radial motion and $\calX_h$ is the Hamilton vector field of $\calH_h$, hence governs the motion in the $Y$ directions. Recall that the latter depends 
parametrically on $r$ since $h$ does, and as such defines a family of vector fields on $Y$.

We first observe that $\calX_g$ is tangential to the submanifold $\{\eta=0\}$ since the $\partial_\eta$ coefficient, which is $-f(r)^{-2}\partial |\eta|^2_{h_y} / \partial y$, vanishes at $\eta=0$. This implies that any integral curve of $\calX_g$ satisfies
$$ \text{either $\eta(t)=0$ for all $t$ or $\eta(t)\neq0$ for all $t$.} $$
The Hamilton equations for $\rdot$ and $\ydot$ are
$$ \rdot = \frac{\partial \calH_g}{\partial\xi} = \xi\,,\quad \ydot = \frac{\partial \calH_g}{\partial\eta} = \frac1{f(r)^2}\frac{\partial \calH_h}{\partial\eta} = \frac1{f(r)^2}\eta^\sharp\,.$$
From this we deduce the first part of Theorem \ref{thm:two kinds}: Geodesics with $\eta\equiv0$ have constant $y$, and $\rdot=\xi\equiv \pm1$ by the unit speed condition \eqref{eqn:constant energy}, so they are radial. All other geodesics have $\ydot(t)\neq0$ for all $t$, so they are winding.

We now consider winding geodesics, i.e. $\eta(t)\neq0$ for all $t$, and derive the full Hamilton equations for them.
Calculating $\frac{\partial \calH_g}{\partial r} = \frac{\vert \eta\vert^2_h}{f(r)^2} \left(-\frac{f'(r)}{f(r)} + \frac{\partial_r|\eta|}{|\eta|}\right)$ and using \eqref{eqn:constant energy} we get

\begin{equation}
 \calX_\rad = \xi \partial_r + (1-\xi^2)\left(\frac{f'(r)}{f(r)} - \tfrac{\partial_r|\eta|}{|\eta|}\right) \partial_\xi
\end{equation}
We write $\tfrac{\partial_r|\eta|}{|\eta|}$ in small font to emphasize that it is to be considered as a small perturbation (it vanishes in the warped product case).
Correspondingly, the lifted geodesics are solutions $t\mapsto (r(t),y(t),\xi(t),\eta(t))$ of the system
\begin{align}
\rdot &=\xi & 
\xidot &=(1-\xi^2)\left(\frac{f'(r)}{f(r)}-\tfrac{\partial_r|\eta|}{|\eta|}\right) 
 \label{eqn:ham r xi}\\[3mm]
\ydot &=f(r)^{-2} \,\partial_\eta \calH_h&
\etadot &=-f(r)^{-2}\, \partial_y \calH_h
 \label{eqn:ham phi eta}
\end{align}
By \eqref{eqn:constant energy}, \eqref{eqn:props f}, and the assumption $\eta(t)\neq 0$ for all $t$, the variable $\xi$ is constrained to lie in $(-1,1)$, so we may introduce a new variable $\theta$ (i.e.\ coordinate on the hypersurface $\calH_g=\frac12$)  by
$$ \xi = \sin \theta\,,\qquad \theta\in\left(-\frac\pi2,\frac\pi2\right)\,.$$
This will simplify the calculations below. The equations \eqref{eqn:ham r xi}  then turn into
\begin{align}
 \label{eqn:ham r theta}\tag{\ref{eqn:ham r xi}'}
 \rdot &= \sin\theta &
 \thetadot =  \left(\frac{f'(r)}{f(r)} - \tfrac{\partial_r|\eta|}{|\eta|}\right) \cos\theta
\end{align}
and the unit speed condition \eqref{eqn:constant energy} turns into
\begin{equation}
 \label{eqn:unit speed theta}
 f(r) \cos\theta = |\eta|\,.
\end{equation}
Note that $\rdot=\sin\theta$, $|\gammadot|=1$ implies that $\theta$ is the angle between $\gammadot$ and the 'circle of latitude' $\{r\}\times Y$. 
\subsection{The warped product case; Clairaut's integral}
\label{Section:Clairaut}
In the warped product case, i.e.\ where $h$ does not depend on $r$, two simplifications happen: The vector field $\calX_\rad$ is independent of $\eta$ since the term $\partial_r|\eta|$ vanishes, and the vector field $\calX_h$ is independent of $r$.  

This means that lifted geodesics $t\mapsto (r(t),\theta(t),y(t),\eta(t))$ are given as follows:
\begin{enumerate}
 \item $(r(t),\theta(t))$ solves the system \eqref{eqn:ham r theta}
 \item $(y(t),\eta(t))$ is an integral curve for the Hamilton vector field $\calX_h$, but with time reparametrised using the factor $f(r)^{-2}$ with $r=r(t)$ obtained in step 1. 
\end{enumerate}
Here we use that the vector field $\calH_h$ and the time-dependent vector field $\frac1{f(r(t))^2}\calH_h$ on $Y$ have the same integral curves except for time reparametrisation.
We also see that, along each integral curve,
\begin{equation}
\label{eqn:Clairaut} 
f(r)\cos\theta = \const\,,
\end{equation}
either by direct calculation from \eqref{eqn:ham r theta} or using \eqref{eqn:unit speed theta} and the fact that $|\eta|$ is constant along integral curves for $\calX_h$. The relation \eqref{eqn:Clairaut} completely determines the solutions of the $r,\theta$ system up to time parametrisation.

The expression on the left of  \eqref{eqn:Clairaut} is known as  Clairaut's integral in the context of surfaces of revolution. 

\subsection{Estimates for the deviation from the warped product case}
If $h$ depends on $r$ then the \lq Clairaut integral \rq\ $f(r)\cos\theta$ is not constant along geodesics, but we can estimate its variation. 

By \eqref{eqn:unit speed theta} the Clairaut integral equals $|\eta|$, so we consider this quantity. Note that $|\eta|=|\eta|_h$ depends on $r$ through $h$.
 
We recall that Assumption \ref{assump:reg} says
\begin{equation}
\tag{\ref{eq:drhBound}'}
-2ch \leq \partial_r h \leq 2ch
\end{equation}
for a constant $c\geq0$.
\begin{Lem} \label{lem:gen warp estimates}
With $c$ as above, we have for any  $\eta\in T^*Y\setminus\{0\}$:
\begin{gather}
-c\leq \tfrac{\partial_r \vert \eta\vert}{\vert \eta\vert} \leq c\,.
 \label{eqn:error est}
\end{gather} 
Furthermore, along any lifted geodesic we have
\begin{equation} \label{eqn:eta variation est}
\left| \frac d{dt}|\eta| \right| \leq c|\eta|\, \vert\sin\theta \vert
\end{equation}
\end{Lem}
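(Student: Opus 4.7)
The plan is to prove \eqref{eqn:error est} by a purely algebraic calculation on $T^*Y$ using the dualized form of \eqref{eq:drhBound}, and then to obtain \eqref{eqn:eta variation est} by checking that along a lifted geodesic the $(y,\eta)$-equations contribute nothing to $\tfrac{d}{dt}|\eta|$, so that the only contribution comes from the $\rdot$-term, to which \eqref{eqn:error est} then immediately applies.

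For \eqref{eqn:error est}, I would work in coordinates, writing $|\eta|^2 = h^{ij}(r,y)\eta_i\eta_j$ with $(h^{ij})$ the inverse matrix of $(h_{ij})$. Differentiating $h^{ij}h_{jk}=\delta^i_k$ in $r$ gives the matrix identity $\partial_r h^{-1} = -h^{-1}(\partial_r h)h^{-1}$, so the symmetric-tensor inequality $-2ch\leq \partial_r h\leq 2ch$ dualizes to $-2c\,h^{-1}\leq \partial_r h^{-1}\leq 2c\,h^{-1}$ on $T^*Y$. Contracting with $\eta\otimes\eta$ yields $|\partial_r|\eta|^2|\leq 2c|\eta|^2$, and dividing by $2|\eta|$ produces $|\partial_r|\eta||\leq c|\eta|$.

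For \eqref{eqn:eta variation est}, the chain rule along a lifted geodesic reads
\[
\frac{d}{dt}|\eta|^2 = (\partial_r|\eta|^2)\,\rdot + (\partial_{y^k}|\eta|^2)\ydot^k + (\partial_{\eta_i}|\eta|^2)\etadot_i.
\]
Substituting \eqref{eqn:ham phi eta}, the last two terms become $f(r)^{-2}\{|\eta|^2,\calH_h\}$, where $\{\cdot,\cdot\}$ is the Poisson bracket on $T^*Y$ at frozen $r$. Since $|\eta|^2=2\calH_h$, this bracket vanishes. Hence $\tfrac{d}{dt}|\eta|^2 = (\partial_r|\eta|^2)\rdot$, equivalently $\tfrac{d}{dt}|\eta|=(\partial_r|\eta|)\sin\theta$ by \eqref{eqn:ham r theta}, and applying \eqref{eqn:error est} yields the desired bound $|\tfrac{d}{dt}|\eta||\leq c|\eta||\sin\theta|$.

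I expect no real obstacle here; the only point requiring a little care is the sign in $\partial_r h^{-1}=-h^{-1}(\partial_r h)h^{-1}$, whose minus sign does not affect the symmetric form of the estimate. The conceptual content of the second bound is the Poisson-bracket cancellation, which encodes the fact that the $Y$-component of the geodesic flow preserves the fibre norm $|\eta|$ at each fixed base value of $r$, so that the only mechanism by which $|\eta|$ can vary along a geodesic is the explicit $r$-dependence of $h$, driven by $\rdot=\sin\theta$.
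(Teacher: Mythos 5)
Your proposal is correct and follows essentially the same route as the paper: dualizing \eqref{eq:drhBound} to the inverse metric and contracting with $\eta\otimes\eta$ for \eqref{eqn:error est}, and then observing that the $(y,\eta)$-contributions to $\tfrac{d}{dt}|\eta|^2$ cancel (the paper writes out the cancellation explicitly rather than invoking $\{\calH_h,\calH_h\}=0$, but it is the same computation), leaving only $\partial_r|\eta|\cdot\rdot$ with $\rdot=\sin\theta$. Your remark about the sign in $\partial_r h^{-1}=-h^{-1}(\partial_r h)h^{-1}$ being harmless for the symmetric two-sided bound is exactly the point the paper glosses over.
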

Estimate \eqref{eqn:eta variation est} quantifies the fact that lifted geodesics are tangent to $\{\eta=0\}$.
\begin{proof}
Let $h^{-1}$ denote the inverse of $h$ considered as a matrix in some local coordinates. 
We multiply (\ref{eq:drhBound}') by $h^{-1}$ from left and right to get  
$-2ch^{-1} \leq \partial_r h^{-1} \leq 2ch^{-1}$ and therefore $-2c\vert \eta\vert^2 \leq \partial_r \vert \eta\vert^2\leq 2c\vert \eta\vert^2$. This implies \eqref{eqn:error est}.

Lifted geodesics are integral curves of $\calH_g$, and along such a curve we have
\begin{align*}
 \frac d{dt}\calH_h &= (\partial_y \calH_h) \ydot + (\partial_\eta \calH_h) \etadot + (\partial_r \calH_h) \rdot
 \\
 & = (\partial_r \calH_h) \rdot
\end{align*}
using \eqref{eqn:ham phi eta}, and this gives
$ \frac d{dt}|\eta| = \partial_r|\eta| \sin\theta$ from which we deduce \eqref{eqn:eta variation est} using \eqref{eqn:error est}.
\end{proof}

\section{Proofs of the main theorems}
\label{Section:Winding}

In this section we use the results from the previous section in combination with the properties \eqref{eqn:props f} of $f$ to prove Theorems \ref{thm:two kinds}, \ref{thm:radialpart}, and \ref{thm:winding}.

First, we supplement the estimates in Lemma \ref{lem:gen warp estimates} by estimates that use the special properties of $f$.
\begin{Lem}
\label{Lem:f'fEstimate}
 A warping function $f$ as in \eqref{eqn:props f} satisfies
\begin{equation}
  \frac{f'(r)}{f(r)} \geq \frac1r 
 \label{eqn:f' f estimate} 
\end{equation}
Furthermore, if $R<\frac1c$ (with $c$ satisfying \eqref{eq:drhBound}) then along any lifted winding geodesic we have
\begin{equation}
\label{eqn:thetadot} 
 \thetadot > 0
\end{equation}
\end{Lem}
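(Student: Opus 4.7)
The plan is to deal with the two estimates separately, each being a short consequence of the hypotheses on $f$.

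For \eqref{eqn:f' f estimate}, I would use convexity of $f$ via the supporting tangent inequality. Since $f$ is convex and differentiable on $[0,R)$, the graph of $f$ lies above every tangent line, so evaluating at $0$ gives
\begin{equation*}
f(0) \;\geq\; f(r) + f'(r)(0-r) \;=\; f(r) - rf'(r).
\end{equation*}
Combined with $f(0)=0$ this yields $r f'(r) \geq f(r)$. Dividing by $r\,f(r)$, which is positive since $f:(0,R)\to(0,\infty)$ by Definition \ref{def:intro warped product}, gives $f'(r)/f(r) \geq 1/r$. A brief sentence should point out that $f(r)>0$ for $r\in(0,R)$ follows from the generalized warped product assumption (alternatively, convexity with $f(0)=0$ and $f\not\equiv 0$ on $[0,R)$ already forces $f$ to be strictly increasing on the interior, hence strictly positive).

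For \eqref{eqn:thetadot}, I would substitute the bounds just obtained into the $\thetadot$-equation in \eqref{eqn:ham r theta}. For a winding geodesic we have $\eta(t)\neq 0$ for all $t$ (so $\partial_r|\eta|/|\eta|$ is defined), and $\theta\in(-\pi/2,\pi/2)$ (so $\cos\theta>0$). Using \eqref{eqn:f' f estimate} and the bound $|\partial_r|\eta|/|\eta||\leq c$ from Lemma \ref{lem:gen warp estimates},
\begin{equation*}
\frac{f'(r)}{f(r)} - \tfrac{\partial_r|\eta|}{|\eta|} \;\geq\; \frac{1}{r} - c \;>\; \frac{1}{R} - c \;>\; 0,
\end{equation*}
where the last strict inequality is precisely the assumption $R<1/c$, and the second-to-last uses $r<R$. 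Multiplying by $\cos\theta>0$ gives $\thetadot>0$.

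There is no real obstacle here: both parts are one-line consequences once the right inequality is set up. The only point requiring a bit of care is making sure the expression $\partial_r|\eta|/|\eta|$ is well-defined along the geodesic, which is precisely guaranteed by the winding condition $\eta(t)\neq 0$ noted right after \eqref{eqn:ham phi eta}. I would state this explicitly at the start of the second part so the proof is self-contained.
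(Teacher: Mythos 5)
Your proof is correct and follows essentially the same route as the paper: the first inequality is convexity plus $f(0)=0$ (the paper integrates the increasing derivative, $f(r)=\int_0^r f'(s)\,ds\leq rf'(r)$, while you use the supporting tangent line — the same one-line idea), and the second part is exactly the paper's substitution of \eqref{eqn:f' f estimate} and \eqref{eqn:error est} into \eqref{eqn:ham r theta} together with $\cos\theta>0$ and $r<R<1/c$. Your explicit remarks on well-definedness of $\partial_r|\eta|/|\eta|$ and positivity of $f$ are fine but not substantively different from what the paper leaves implicit.
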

\begin{proof}
  \eqref{eqn:f' f estimate} follows from $f(0)=0$ and the convexity of $f$:
 $ f(r)=\int_0^r f'(s)\,ds \leq r f'(r)$. 
 
 The equation \eqref{eqn:ham r theta} for $\thetadot$ along with \eqref{eqn:f' f estimate} and \eqref{eqn:error est} imply $\thetadot \geq (\frac1r-c)\cos\theta$. Also $\cos\theta>0$ for winding geodesics, which gives the second claim.
\end{proof}

\begin{proof}[Proof of Theorem \ref{thm:two kinds}]
By the Clairaut-like relation \eqref{eqn:unit speed theta}, we see that $\cos\theta\neq 0$ for winding geodesics. From $\rdot=\sin\theta$ and \eqref{eqn:thetadot}  we get 
\[\ddot{r}=\thetadot\cos\theta> 0\,,\]
so $r$ is strictly convex.
Then (b), (c) follow easily since geodesics exist as long as $r<R$.

The statement about reparametrisation in the warped product case was explained in Section \ref{Section:Clairaut}.
\end{proof}

For the proof of Theorem \ref{thm:radialpart} 
it is useful to introduce two new variables $\rho$ and $u$ (i.e. functions on $(0,R)\times Y$, resp.\ its cotangent bundle), defined by the identities: 
\begin{equation}
 \label{eqn:def rho u}
\begin{aligned}
  \rho &= f(r) \\
  \rho \sin \theta &= f(u)
\end{aligned}
\end{equation}

We will use the following bounds on the variation of $\eta$ and $r$  along a geodesic. 
 \begin{Lem}
\label{Lem:rBounds}
Assume $R<\frac1c$, with $c$ defined in \eqref{eq:drhBound}. Let $\gamma_\delta$ be a family of geodesics as in Theorem \ref{thm:radialpart}(a). Then
\begin{equation}
f(\delta)e^{-c(r_\delta-\delta)}\leq \vert \eta_\delta\vert \leq f(\delta)e^{c(r_\delta-\delta)}.
\label{eq:etaBounds}
\end{equation}
\begin{equation}
(1-C\delta)\,\vert t\vert \leq r_{\delta}(t)\leq \vert t\vert+\delta
\label{eq:rBoundsSemiWarped}
\end{equation}
holds for all time $t$ and all $\delta$.
Here $C=ce^{cR}$.
\end{Lem}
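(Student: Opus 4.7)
The plan is to first establish the $|\eta|$ bound \eqref{eq:etaBounds}, and then use it (together with the radial equation of motion) to prove both halves of \eqref{eq:rBoundsSemiWarped}. At the minimum $t=0$ we have $\dot r(0)=0$, hence $\sin\theta(0)=0$ and $\cos\theta(0)=1$; the Clairaut-type identity \eqref{eqn:unit speed theta} then forces $|\eta|(0)=f(\delta)$. Dividing \eqref{eqn:eta variation est} by $|\eta|$, integrating, and using $\sin\theta=\dot r$ gives $|\ln(|\eta(t)|/f(\delta))|\leq c\int_0^t|\dot r(s)|\,ds$. Because $r$ is strictly convex with minimum at $0$ by Theorem \ref{thm:two kinds}(a), $\dot r$ has constant sign on each side of $t=0$, so $\int_0^t|\dot r(s)|\,ds=r(t)-\delta$, and exponentiating yields \eqref{eq:etaBounds}.

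The upper bound $r_\delta(t)\leq\delta+|t|$ in \eqref{eq:rBoundsSemiWarped} is immediate from $|\dot r|=|\sin\theta|\leq 1$ and $r(0)=\delta$. For the lower bound, the plan is to derive a uniform-in-$t$ lower bound on $(r^2)''$ and integrate it twice. Starting from $(r^2)''=2\dot r^2+2r\ddot r$, together with $\ddot r=(f'/f-\partial_r|\eta|/|\eta|)\cos^2\theta$ from \eqref{eqn:ham r theta}, the estimate $f'/f\geq 1/r$ of Lemma \ref{Lem:f'fEstimate}, and $|\partial_r|\eta|/|\eta||\leq c$ from \eqref{eqn:error est}, I obtain
\[(r^2)''\geq 2\dot r^2+2(1-cr)\cos^2\theta=2-2c\,r\cos^2\theta.\]

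The key step is to bound $r\cos^2\theta$ independently of the current value of $r$. Dropping one factor of $\cos\theta\leq 1$ and applying \eqref{eqn:unit speed theta} gives $r\cos^2\theta\leq r\cos\theta = r|\eta|/f(r)$. Convexity of $f$ with $f(0)=0$ makes $f(r)/r$ non-decreasing, so $r/f(r)\leq\delta/f(\delta)$; combined with $|\eta|\leq f(\delta)e^{c(r-\delta)}\leq f(\delta)e^{cR}$ from \eqref{eq:etaBounds}, this produces $r\cos^2\theta\leq\delta e^{cR}$ and hence $(r^2)''\geq 2(1-C\delta)$ with $C=ce^{cR}$. Integrating twice from $0$, using $r(0)=\delta$ and $(r^2)'(0)=2r(0)\dot r(0)=0$, yields $r(t)^2\geq\delta^2+(1-C\delta)t^2\geq(1-C\delta)t^2$; the inequality $\sqrt{1-C\delta}\geq 1-C\delta$ (valid when $C\delta\leq 1$, the remaining case being trivial since $r\geq 0$) finishes the proof, with the case $t<0$ following by the time-reversal symmetry of the geodesic equations.

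The main obstacle I anticipate is choosing the right estimate for $r\cos^2\theta$. The two naive options—using $\cos^2\theta\leq 1$ (which leaves a factor $cr\leq cR$ and degrades the constant from $\delta$-size to $R$-size) or $\cos^2\theta\leq(\delta e^{cR}/r)^2$ (which leaves $1/r$ inside an integral and only closes via a bootstrap against the bound one is trying to prove)—miss the mark. Keeping exactly one factor of $\cos\theta$, converting it via Clairaut into $|\eta|/f(r)$, and then cancelling $r$ against $f(r)$ through the convexity inequality $r/f(r)\leq\delta/f(\delta)$ is the clean move that decouples the estimate and produces the constant $C=ce^{cR}$.
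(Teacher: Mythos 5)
Your proof is correct. The derivation of \eqref{eq:etaBounds} (integrating $\bigl|\tfrac{d}{dt}\log|\eta|\bigr|\leq c|\rdot|$ from the minimum, where $|\eta(0)|=f(\delta)$) and of the upper bound in \eqref{eq:rBoundsSemiWarped} (from $|\rdot|\leq1$) coincide with the paper's. For the lower bound, however, you take a genuinely different route. The paper introduces the auxiliary variable $u$ defined by $f(u)=f(r)\sin\theta$, derives the first-order identity $\udot\,f'(u)=f'(r)\bigl[1-\tfrac{\partial_r|\eta|}{f'(r)}\cos\theta\bigr]$, and uses convexity of $f$ (via $f'(u)\leq f'(r)$) to conclude $\udot\geq 1-C\tfrac{f(\delta)}{f'(\delta)}$, whence $r(t)\geq u(t)\geq(1-C\tfrac{f(\delta)}{f'(\delta)})t$. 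You instead prove the second-order convexity estimate $(r^2)''\geq 2(1-C\delta)$, using $\ddot r=(f'/f-\partial_r|\eta|/|\eta|)\cos^2\theta$, the bound $f'/f\geq 1/r$, and the key observation that $r\cos^2\theta\leq \tfrac{r}{f(r)}|\eta|\leq\tfrac{\delta}{f(\delta)}\cdot f(\delta)e^{cR}=\delta e^{cR}$ (monotonicity of $f(r)/r$ by convexity plus \eqref{eq:etaBounds}); two integrations with $(r^2)'(0)=0$ then give $r(t)^2\geq\delta^2+(1-C\delta)t^2$, which suffices. Both arguments use convexity of $f$ in an essential way and yield the same constant $C=ce^{cR}$. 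What the paper's route buys is twofold: it actually produces the sharper bound \eqref{eq:rBoundsimproved} with $f(\delta)/f'(\delta)\leq\delta$ in place of $\delta$, and the variable $u$ is reused later in the proof of the comparison principle, Theorem \ref{thm:radialpart}(b). What your route buys is self-containedness: it avoids introducing $u$ altogether and reduces the lower bound to an elementary convexity statement about $r^2$, with the two naive estimates of $r\cos^2\theta$ correctly identified as insufficient and the Clairaut-based cancellation $r/f(r)\leq\delta/f(\delta)$ as the decisive step.
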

\begin{proof}[Proof of Lemma \ref{Lem:rBounds}]
 We will drop all $\delta$-subscripts, writing $r=r_{\delta}$ and so on.
 We may assume $t>0$. Then $\theta>0$ since $\theta(0)=0$ and $\thetadot>0$ by \eqref{eqn:thetadot}. 

We first prove \eqref{eq:etaBounds}. 
We rewrite \eqref{eqn:eta variation est} as $\bigl|\frac d{dt}\log|\eta|\bigr| \leq c \sin\theta = c\rdot$, so
$$ -c\rdot \leq \frac d{dt}\log|\eta| \leq c\rdot\,. $$
We integrate this from $t=0$ and exponentiate.
Using  $r(0)=\delta$ and $|\eta(0)| = f(\delta)$ (from \eqref{eqn:unit speed theta} since $\theta(0)=0$) we get \eqref{eq:etaBounds}.

\medskip

We now prove \eqref{eq:rBoundsSemiWarped}. 
The unit speed condition implies $\vert \rdot(t)\vert\leq 1$, so
\[r(t)-\delta=r(t)-r(0)=\int_0^t \rdot(s)\, ds\leq t.\]
This gives the upper bound in \eqref{eq:rBoundsSemiWarped}

For the lower bound we first consider the warped product case, for which $c=0$, hence $C=0$, so we need to show
\begin{equation}
\vert t\vert\leq r_{\delta}(t).
\label{eq:rBoundsWarped}
\end{equation}
 Differentiating $f(u)=f(r)\sin\theta$ in $t$ we get
\begin{equation}
 \label{eqn:udot0}
 \udot f'(u) = \rdot f'(r)\sin\theta + \thetadot f(r)\cos\theta
\end{equation}
which by virtue of $\rdot=\sin\theta$, $\thetadot = \frac{f'(r)}{f(r)}\cos\theta$ simplifies to 
\begin{equation}
 \label{eqn:udot}
\udot f'(u) = f'(r)
\end{equation}
Now $f(u)\leq f(r)$ by \eqref{eqn:def rho u}, so $u\leq r$ since $f$ is strictly increasing. Then $f'(u)\leq f'(r)$ since $f'$ is increasing. So \eqref{eqn:udot} implies
\[ \udot\geq 1\,.\]
From $u(0)=0$ we now get $t\leq u(t)\leq r(t)$ as claimed.
\medskip

Now we prove the lower bound in \eqref{eq:rBoundsSemiWarped} for the generalized warped product case.
In fact, we will prove the stronger lower bound
\begin{equation}
 \label{eq:rBoundsimproved}
 \left(1-C\frac{f(\delta)}{f'(\delta)}\right)|t| \leq  r_\delta(t)
\end{equation}
which implies \eqref{eq:rBoundsSemiWarped} by \eqref{eqn:f' f estimate}.

We use \eqref{eqn:udot0} again, but now we have the extra summand $-\frac{\partial_r|\eta|}{|\eta|}\cos\theta$ in $\thetadot$ (see \eqref{eqn:ham r theta}), which equals $-\frac{\partial_r|\eta|}{f(r)}$ by \eqref{eqn:unit speed theta}.
Therefore, \eqref{eqn:udot} is replaced by
\begin{equation}
\label{eqn:udot2} 
\notag
 \udot f'(u) = f'(r) - \partial_r|\eta|\,\cos\theta = f'(r)\left[1-\frac{\partial_r|\eta|}{f'(r)} \cos\theta\right]
\end{equation}
This implies \eqref{eq:rBoundsimproved} by the same argument as in the warped case, provided
\begin{equation}
\notag
1-\frac{\partial_r|\eta|}{f'(r)} \cos\theta \geq 1-C\frac{f(\delta)}{f'(\delta)}\,.
\end{equation}
Finally, this inequality follows from  $f'(r)\geq f'(\delta)$ (since $r\geq\delta$ and $f'$ is increasing), $\cos\theta\leq 1$, and 
$\partial_r|\eta| \leq C f(\delta)$. The latter inequality follows from  \eqref{eqn:error est} and \eqref{eq:etaBounds}, with $C=c e^{cR}$.
\end{proof}

\begin{proof}[Proof of Theorem \ref{thm:radialpart}]

By symmetry, it suffices to consider the $t>0$ part of the geodesics. Then $\theta>0$ since $\theta(0)=0$ and $\thetadot>0$.
We write $I_{\delta}=(T_-^{\delta},T_+^{\delta})$. By Theorem \ref{thm:two kinds} part (b) and (c), we have $r_{\delta}(T_{\pm}^{\delta})\coloneqq \lim_{t\to T_{\pm}^{\delta}} r_{\delta}(t)=R$. By Lemma \ref{Lem:rBounds}, we therefore have
\[(1-C\delta)\vert T_{\pm}^{\delta}\vert\leq R\leq \vert T_{\pm}^{\delta}\vert+\delta,\]
and sending $\delta\to 0$ gives 
\[T_{\pm}^{\delta}\xrightarrow{\delta\to 0} \pm R.\]  
Also $r_\delta(t)\to|t|$ follows from \eqref{eq:rBoundsSemiWarped}.

It remains to prove (b) in the warped product case. So let $\gamma=(r,y)$ and $\overline{\gamma}=(\overline{r},\overline{y})$ be two geodesics with $\delta=r(0)$, $\overline{\delta}=\overline{r}(0)$ and $\delta< \overline{\delta}$. We write $u$ and $\overline{u}$ for the function defined in \eqref{eqn:def rho u} associated to $r$ and $\overline{r}$ respectively. By the Clairaut-like relation \eqref{eqn:Clairaut} and the definition of $u$ \eqref{eqn:def rho u} we have
\begin{equation}
f(r)^2=f(u)^2+f(\delta)^2
\label{eq:u-r-relation}
\end{equation}
and similarly for $f(\overline{r})$. 
Since $\partial_r h=0$, we may use \eqref{eqn:udot} to find
\begin{equation}
\frac{d}{dt}f(u)=f'(r).
\label{eq:fu_iComp}
\end{equation}
and similarly for $f(\overline{u})$. 
The convexity of $f$ then implies 
\[\frac{d}{dt}\left(f(\overline{u})-f(u)\right)\geq 0\]
for all $t$ such that $\overline{r}(t)\geq r(t)$. Assume for contradiction that there is a maximal $t_0<T_{\delta}$ such that $\overline{r}(t)>r(t)$ on $[0,t_0)$. By continuity, $r(t_0)=\overline{r}(t_0)$. By \eqref{eq:fu_iComp}, we therefore have
\[f(\overline{u}(t_0))-f(u(t_0))=\int_0^{t_0} f'(\overline{r}(t))-f'(r(t))\, dt\geq 0.\]
Inserting this into \eqref{eq:u-r-relation}, we find
\[f(\overline{r}(t_0))^2-f(\overline{\delta})^2=f(\overline{u}(t_0))^2\geq f(u(t_0))^2=f(r(t_0))^2-f(\delta)^2\]
which rearranges to yield
\[f(\overline{r}(t_0))^2-f(r(t_0))^2\geq f(\overline{\delta})^2-f(\delta)^2>0.\]
This shows $\overline{r}(t_0)>r(t_0)$, contradicting the maximality of $t_0$.

\end{proof}
\begin{Rem}
The proof of the comparison theorem might seem a bit indirect, using $f(u(t))$ instead of $r(t)$. The reason being that it is in general false that $\overline{r}(t)-r(t)$ is increasing. Indeed, for the conical case $f(r)=r$, one can solve the equation of motion explicitly to find $r(t)=\sqrt{t^2+\delta^2}$, and $\overline{r}(t)-r(t)$ is strictly decreasing but never 0 when $\overline{\delta}>\delta$. The auxiliary function $f(u(t))$ is in this conical case simply $f(u(t))=t=f(\overline{u}(t)$, so $f(\overline{u}(t))-f(u(t))=0$ is increasing.
\end{Rem}


\begin{proof}[Proof of the first part of Theorem \ref{thm:winding}]
We will use the coordinates $\rho=f(r)$ and $\sin\theta=\xi$ again. We will drop the $\delta$-subscripts on all the variables. Since \eqref{eqn:thetadot} says $\dot{\theta}>0$, we can and will parametrise using $\theta$ instead of $t$.   From the equation of motion $\eqref{eqn:ham phi eta}$, and the Clairaut-like relation \eqref{eqn:Clairaut}, we have 
\begin{equation}
\vert \ydot\vert=\frac{\cos(\theta)}{\rho}
\label{eq:normphi'}
\end{equation}
(we write $\vert \ydot\vert = \vert \ydot\vert_h := \vert \ydot\vert_{h_{r_\delta(t)}}$).

Recall that $F$ is the inverse function of $f$. Using  the equation of motion \eqref{eqn:ham r theta}, $\partial_r = f'(r)\partial_\rho$ and $\frac1{f'(r)}=F'(\rho)$ we can write the measure as
\begin{equation}
\vert \ydot \vert  dt=\frac{F'(\rho)}{1-\rho \partial_\rho \log \vert \eta\vert)}\, d\theta\,.
\label{eq:Measure}
\end{equation}

We again start by assuming $h$ does not depend on $r$, since the argument becomes more transparent in this case.
In this case the formula \eqref{eq:Measure} simplifies to the elegant
\[\vert \ydot \vert  dt =F'(\rho)\,d\theta,\]
and one can use the Clairaut-like relation \eqref{eqn:Clairaut} to find
\[\rho=\frac{\rho_0}{\cos(\theta)},\]
where $\rho_0\coloneqq f(\delta)$.
Hence
\begin{equation}
f'(\delta) \vert \ydot \vert  dt=\frac{1}{F'(\rho_0)}\vert \ydot\vert  dt=\cfrac{F'\left(\cfrac{\rho_0}{\cos\theta}\right)}{F'(\rho_0)}\, d\theta.
\label{eq:LengthMeasureWarped}
\end{equation}
When $f$ is convex, $F'$ is decreasing, and the right hand integrand is bounded by $1$. So by the dominated convergence theorem and the non-oscillation condition \eqref{eqn:F condition},
\[\lim_{\delta\to 0} f'(\delta)\int_{T_-}^{T_+} \vert \ydot \vert  dt=\int_{-\pi/2}^{\pi/2} \lim_{\rho_0\to 0} \cfrac{F'\left(\cfrac{\rho_0}{\cos\theta}\right)}{F'(\rho_0)}\, d\theta=\int_{-\pi/2}^{\pi/2} \mathfrak{F}\left(\frac{1}{\cos\theta}\right)\, d\theta=C_f.\]
This proves part (a) when $h$ is independent of $r$. 
For the general case, we return to \eqref{eq:Measure}. Using the bounds \eqref{eqn:error est} again results in
\[\frac{F'(\rho)}{1+c\rho F'(\rho)}\, d\theta\leq \vert \ydot\vert dt\leq \frac{F'(\rho)}{1-c\rho F'(\rho)}\, d\theta\,.\]
Since $F$ is concave and $F(0)=0$, we have $\rho F'(\rho)\leq F(\rho)$, thus
\begin{equation}
\frac{F'(\rho)}{1+c F(\rho)}\, d\theta\leq \vert \ydot\vert dt\leq \frac{F'(\rho)}{1-c F(\rho)}\, d\theta\,.
\label{eq:ydotBounds}
\end{equation}
We therefore need bounds on both sides of \eqref{eq:ydotBounds}. Recall that $\rho$ is a function of $\delta$ and $\theta$. We consider its behaviour as $\delta\to0$ for fixed $\theta$.
The bounds \eqref{eq:etaBounds} say, since $|\eta|=\rho\cos\theta$,
\begin{equation}
 \label{eqn:rho ineq}
\frac{f(\delta)}{\cos\theta} e^{-c({r}-\delta)}\leq {\rho}\leq  \frac{f(\delta)}{\cos\theta} e^{c({r}-\delta)},
\end{equation}
so 
\[\lim_{\delta\to 0} {\rho(\theta)}=0\]
for every $\theta\in(-\frac\pi2,\frac\pi2)$. But then also ${r}=F({\rho})\xrightarrow{\delta\to 0}0$. Hence, using \eqref{eqn:rho ineq} and $f(\delta)=\rho_0$ we get
\[\lim_{\delta\to 0} \frac{{\rho}}{\rho_0}\cos\theta=1.\]
 Since, per assumption,
\[\lim_{\epsilon\to 0} \frac{F'(\epsilon \sigma)}{F'(\epsilon)}=\mathfrak{F}(\sigma)\]
and the convergence is uniform on compacts, we have
\[\lim_{\delta\to 0} \frac{F'({\rho})}{F'(\rho_0)(1\pm cF({\rho}))}=\lim_{\delta \to 0} \frac{F'\left( \frac{{\rho}}{\rho_0} \cos\theta \cdot \frac{\rho_0}{\cos\theta}\right)}{F'(\rho_0)}=\mathfrak{F}\left(\frac{1}{\cos\theta}\right).\]
Both the upper and lower bounds in \eqref{eq:ydotBounds} are bounded functions, hence integrable. By the dominated convergence theorem again, we find
\[\int_{-\pi/2}^{\pi/2} \mathfrak{F}\left(\frac{1}{\cos\theta}\right)\, d\theta\leq \lim_{\delta\to 0} f'(\delta) \ell(y_{\delta})\leq \int_{-\pi/2}^{\pi/2} \mathfrak{F}\left(\frac{1}{\cos\theta}\right)\, d\theta. \]
This proves part (a) in general.
\end{proof}
We now turn to the second part of Theorem \ref{thm:winding}. We will rescale the time $\tau$ according to \eqref{eq:dtau},
\begin{equation}
 \frac{d\tau}{dt}=\vert \dot{y}_{\delta}\vert=\frac{\vert \eta\vert}{f(r(t))^2}. 
 \notag
 \end{equation}
 subject to $\tau(0)=0$.
  We also introduce the rescaled angular momentum variable
\begin{equation}
 \overline{\eta}(t)\coloneqq \frac{\eta(t)}{f(\delta)},
 \label{eq:etaRescaling}
 \end{equation}
 which by \eqref{eqn:ham phi eta} implies $\overline{\eta}^{\#}(0)=f(\delta)\ydot(0)$. 
We recall our convention for the time-rescaled variables,
 \begin{align*}
 \tilde{y}(\tau(t))&\coloneqq y(t), &\tilde{\eta}(\tau(t))\coloneqq\overline{\eta}(t).
\end{align*}  
For most of the proof, we will not write out the $\delta$-subscript on $y,\tilde{y},\eta,\tilde{\eta}$.
 We will use a slash $'$ for $\tau$-derivatives and a dot $\dot{}$ for $t$-derivatives.
Recall that we assume 
$\tilde{\eta}(0)=\overline{\eta}(0)=f(\delta)\ydot(0)\to v_0$ as $\delta\to 0$.

\begin{proof}[Proof of the second part of Theorem \ref{thm:winding}]
By the unit speed parametrisation, the length of the time interval is the same as the length of the $Y$-component of the geodesic
\[\Delta\tau=\int_{T_-^{\delta}}^{T_+^{\delta}} \frac{d\tau}{dt}\, dt=\ell(y_\delta).\]
By the first part of Theorem \ref{thm:winding}, this has the asymptotic behaviour
\begin{equation}
\Delta\tau\sim \frac{C_f}{f'(\delta)}.
\label{eq:Deltatau}
\end{equation}

The equations of motion \eqref{eqn:ham phi eta}  get rescaled by $\frac{dt}{d\tau}$ and read
\begin{align}
\tilde{\eta}'(\tau)&=\frac{\dot{\eta}(t)}{f(\delta)\vert \dot{y}\vert}=-\frac{\partial_y \vert \tilde{\eta}\vert^2}{2\vert \tilde{\eta} \vert}\quad\qquad \tilde{y}' = \frac{\tilde{\eta}^\sharp}{|\tilde{\eta}^\sharp|}\,.
\label{eq:rescaledEoM}
\end{align}
We stress that $\tilde{\eta}$ is both a rescaled and time-reparametrised quantity.
 We first observe that the bound \eqref{eq:etaBounds} tells us
\[C'\leq \vert \tilde{\eta}\vert\leq C\]
uniformly in $\tau$ and $\delta$, where $C'>0$. Furthermore, since $h$ is $C^1$ in the $y$-directions and uniformly non-degenerate down to $r=0$, we can find a constant $\tilde{c}\geq 0$ such that
\[ \left\vert \partial_y \vert \tilde{\eta}\vert^2\right\vert \leq 2\tilde{c}\vert \tilde{\eta}\vert^2,\]
hence, by the unit speed condition and equation of motion \eqref{eq:rescaledEoM} respectively,
\[d( \tilde{y}_{\delta}(\tau),\tilde{y}_{\delta}(\sigma))\leq  \int_{\sigma}^{\tau} \left\vert\tilde{y}_{\delta}'(\upsilon)\right \vert\, d\upsilon\leq \vert \tau-\sigma\vert,\]
and
\[\vert \tilde{\eta}_{\delta}(\tau)-\tilde{\eta}_{\delta}(\sigma)\vert =\left\vert \int_{\sigma}^{\tau} \tilde{\eta}_{\delta}'(\upsilon)\, d\upsilon\right \vert\leq \tilde{C} \vert \tau-\sigma\vert.\]
This shows that the families $\tilde{y}_{\delta},\tilde{\eta}_{\delta}$ are uniformly (in $\tau$ and $\delta$) Lipschitz continuous. Let $K\subset \tilde{I}_{\delta}$ be any compact subset. By the Arzel\`{a}-Ascoli theorem, there is a subsequence $\delta_n\to 0$ such that $(\tilde{y}_{\delta_n}, \tilde{\eta}_{\delta_n})\to (\tilde{y}, \tilde{\eta})$ in $C(K;T^*Y)$.
Since we can rewrite the differential equations \eqref{eq:rescaledEoM} as integral equations, the limit $(\tilde{y}, \tilde{\eta})$ satisfies the same equations, with respect to the metric $h_0$ and  with initial condition $\tilde{y}(0)=y_0$ and $\tilde{\eta}(0)=v_0$. By the same argument, any subsequence of $(\tilde{y}_\delta,\tilde{\eta}_\delta)$ has a convergent subsequence, whose limit must be the same $(\tilde{y}, \tilde{\eta})$ since it satisfies the same initial conditions and same equations. By a standard argument, this implies the convergence of $(\tilde{y}_\delta,\tilde{\eta}_\delta)$ to $(\tilde{y}, \tilde{\eta})$. 
\end{proof}

\section{Discussion}
\label{Section:Discussion}

\subsection{The function $\mathfrak{F}$ and the constant $C_f$}
Recall that the constant $C_f$ in the length asymptotics of Theorem \ref{thm:winding} is given by 
\[
C_f=\int_{-\pi/2}^{\pi/2} \mathfrak{F}\left(\frac{1}{\cos(\vartheta)}\right)\, d\vartheta
\quad\text{ where }\ \mathfrak{F}(\sigma)=\lim_{\epsilon\to 0} \frac{F'(\sigma\epsilon)}{F'(\epsilon)}\ \text{ for } \sigma\geq1
\]
with $F$ the inverse function of $f$, see Assumption \ref{ass:non-oscill}. We now discuss properties of $\frakF$ and $C_f$ and their dependency on $f$.

It is useful to introduce a new variable $z$ defined by $x=e^{-z}$, so that $x\to0$ corresponds to $z\to\infty$.
We do this change of variable in both the domain and the range of $f$. That is, for given $f$ we consider the conjugated function
$$ \phi \coloneqq \expbar^{-1} \circ f \circ\expbar\,,\quad \expbar(z) = e^{-z} $$
or explicitly $\phi(z) = - \log f(e^{-z})$. For the examples in Remark \ref{rem:sing types} we get, with $\Phi$ the inverse function of $\phi$:
\begin{equation}
 \label{eqn:examples table}
\begin{array}{c @{\hspace{1cm}} | @{\hspace{1cm}} c @{\hspace{1cm}} |@{\hspace{1cm}} c @{\hspace{1cm}}|@{\hspace{0.1cm}} c}
f(x) & \phi(z) & \Phi(w) & \text{parameter range}\\
\hline
x^\alpha & \alpha z & \frac1\alpha w & \alpha \geq 1 \Tstrut\\[2mm]
\exp\left(-\ln\left(\frac{1}{x}\right)^{\mu}\right) & z^\mu & w^{1/\mu} & \mu > 1\\[2mm]
\exp\left(-\frac{1}{x^\beta}\right) & e^{\beta z} & \frac1\beta\log w & \beta > 0
\end{array}
\end{equation}
This displays the relative rates of vanishing as $x\to0$ of $f$  very clearly since they correspond to rates of growth of $\phi$ as $z\to\infty$: the rate increases with the parameter in each row, and from top to bottom.

\begin{Lem}\label{lem:Cf-new}
\begin{enumerate}
 \item[(a)]
Assume $f$ satisfies \eqref{eqn:props f} and $\mathfrak{F}$ exists. Then
\begin{equation}
\frac{1}{\sigma}\leq \mathfrak{F}(\sigma)\leq 1
\label{eq:Fbounds new}
\end{equation}
for all $\sigma\geq1$ and
\[2\leq C_f\leq \pi\,.\]
\item[(b)]
Assume $f$, $\ftilde$ satisfy \eqref{eqn:props f} and $\frakF$, $\frakFtilde$ exist.  Then
\begin{equation}
 \label{eqn:frakF comparison}
 f = O(\ftilde) \text{ near zero } \ \Longrightarrow\  \frakF \leq \frakFtilde\,,\ \text{ hence } C_f \leq C_{\ftilde} 
\end{equation}
\end{enumerate}
\end{Lem}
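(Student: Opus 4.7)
For part (a), my plan is to show that $\mathfrak{F}$ has the form $\sigma \mapsto \sigma^{-\nu}$ with $\nu \in [0, 1]$. The upper bound $\mathfrak{F}(\sigma) \leq 1$ is immediate from the concavity of $F = f^{-1}$ (a consequence of convexity of $f$), which makes $F'$ non-increasing. For multiplicativity, factor $F'(\sigma_1\sigma_2\epsilon)/F'(\epsilon) = [F'(\sigma_1 \cdot \sigma_2\epsilon)/F'(\sigma_2\epsilon)] \cdot [F'(\sigma_2\epsilon)/F'(\epsilon)]$ and apply the defining limit to each factor (with $\sigma_2\epsilon$ serving as the small parameter in the first factor), obtaining $\mathfrak{F}(\sigma_1\sigma_2) = \mathfrak{F}(\sigma_1)\mathfrak{F}(\sigma_2)$. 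Combined with continuity (from the assumed uniformity of convergence) and monotonicity, this forces the power form $\mathfrak{F}(\sigma) = \sigma^{-\nu}$ for some $\nu \geq 0$.

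The lower bound $\mathfrak{F}(\sigma) \geq 1/\sigma$ amounts to $\nu \leq 1$, which I would prove by contradiction: if $\nu > 1$, then $\mathfrak{F}(2) = 2^{-\nu} < 1/2$, so for $\epsilon$ sufficiently small one has $F'(2\epsilon)/F'(\epsilon) \leq \rho$ for some fixed $\rho < 1/2$. Setting $g(\epsilon) := \epsilon F'(\epsilon)$, this yields $g(2\epsilon) \leq 2\rho\,g(\epsilon)$ with $2\rho < 1$, and iterating backward from a fixed small $\epsilon_0$ forces $g(\epsilon_0/2^n) \geq g(\epsilon_0)/(2\rho)^n \to \infty$. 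This contradicts the tangent-line bound $g(\epsilon) = \epsilon F'(\epsilon) \leq F(\epsilon) \to 0$ for the concave function $F$ with $F(0) = 0$. The $C_f$ bounds then follow by integrating $\cos\theta \leq \mathfrak{F}(1/\cos\theta) \leq 1$ over $\theta \in (-\pi/2, \pi/2)$, using $\int_{-\pi/2}^{\pi/2}\cos\theta\,d\theta = 2$.

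For part (b), I first observe that $\mathfrak{F}$ is scale-invariant: replacing $f$ by $cf$ for $c > 0$ leaves the limit unchanged, as one sees by substituting $\epsilon' = \epsilon/c$ in the defining ratio. So without loss of generality $K = 1$, i.e., $f \leq \tilde f$ near $0$, giving $F \geq \tilde F$ near $0$. By part (a), $\mathfrak{F}(\sigma) = \sigma^{-\nu}$ and $\tilde{\mathfrak{F}}(\sigma) = \sigma^{-\tilde\nu}$ for some $\nu, \tilde\nu \in [0, 1]$, so the claim reduces to $\nu \geq \tilde\nu$. The bridge is the auxiliary asymptotic $\log F(\epsilon)/\log\epsilon \to 1 - \nu$ as $\epsilon \to 0$ (and similarly for $\tilde F$): taking logs in $F(\epsilon) \geq \tilde F(\epsilon)$ and dividing by the negative quantity $\log\epsilon$ flips the inequality, and passing to the limit yields $1 - \nu \leq 1 - \tilde\nu$. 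The consequence $C_f \leq C_{\tilde f}$ is then immediate by integration.

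The main obstacle is the auxiliary asymptotic $\log F(\epsilon)/\log\epsilon \to 1 - \nu$, a Karamata-type statement for the regularly varying function $F'$. My plan is to first show via the identity $F(\sigma\epsilon) - F(\epsilon) = \epsilon F'(\epsilon)\int_1^\sigma F'(s\epsilon)/F'(\epsilon)\,ds$ (combined with uniform convergence, which makes the integral tend to $\int_1^\sigma s^{-\nu}\,ds$) that $\lim_{\epsilon\to 0} \epsilon F'(\epsilon)/F(\epsilon) = 1 - \nu$, the value being pinned down by a functional-equation argument based on multiplicativity of $\mathfrak{F}$ (the quotient lies in $[0,1]$ by the tangent-line bound). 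Consequently $F(\sigma\epsilon)/F(\epsilon) \to \sigma^{1-\nu}$ for $\nu < 1$ (or $\to 1$ for $\nu = 1$), and iterating along $\epsilon_n := \sigma^{-n}\epsilon_0$ and dividing $\log F(\epsilon_n) - \log F(\epsilon_0)$ by $\log\epsilon_n \approx -n\log\sigma$ yields the log-asymptotic via a Cesaro-type argument. The borderline case $\nu = 1$ requires extra care since $F$ is then only slowly varying with no polynomial rate, but $\log F(\epsilon)/\log\epsilon \to 0 = 1 - \nu$ still holds since any slowly varying function $L$ satisfies $\log L(\epsilon)/\log\epsilon \to 0$.
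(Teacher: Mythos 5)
Your part (a) is correct but takes a longer route than necessary: the upper bound is the paper's argument, while for the lower bound the paper simply applies L'H\^opital to get $\lim_{\eps\to0}F(\sigma\eps)/F(\eps)=\sigma\frakF(\sigma)$ and observes that this quotient is $\geq1$ because $F$ is increasing and $\sigma\geq1$. Your multiplicativity argument giving the power form $\frakF(\sigma)=\sigma^{-\nu}$ is valid (and is a structural fact the paper only obtains implicitly), and your doubling contradiction with $g(\eps)=\eps F'(\eps)\leq F(\eps)$ does prove $\nu\leq1$. For part (b), your core idea is in fact the same as the paper's in different clothing: with $w=\log\frac1\eps$ and $\Phi=\expbar^{-1}\circ F\circ\expbar$ one has $\Phi(w)/w=\log F(\eps)/\log\eps$, so the paper's comparison of linear growth rates of $\Phi$ versus $\tilde\Phi$ is exactly your comparison of the limits $\log F(\eps)/\log\eps\to1-\nu$ with the sign flip from dividing by $\log\eps<0$. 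The difference is how the key input is obtained: the paper gets $\lim_w(\Phi(w+s)-\Phi(w))=\log(\sigma\frakF(\sigma))$ in one line from the L'H\^opital identity already used in part (a), whereas you re-derive $F(\sigma\eps)/F(\eps)\to\sigma^{1-\nu}$ through a Karamata-type argument.

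That re-derivation is where there is a genuine soft spot. The claim $\lim_{\eps\to0}\eps F'(\eps)/F(\eps)=1-\nu$ is \emph{not} pinned down by ``a functional-equation argument based on multiplicativity'' together with the bound $q(\eps):=\eps F'(\eps)/F(\eps)\in[0,1]$. Your step 1 only yields $F(\sigma\eps)/F(\eps)-1=q(\eps)\bigl(I_\sigma+o(1)\bigr)$ with $I_\sigma=\int_1^\sigma s^{-\nu}ds$, and the resulting recursion $q(\sigma\eps)\approx q(\eps)\sigma^{1-\nu}/(1+q(\eps)I_\sigma)$ has $q\equiv0$ as a perfectly consistent solution alongside $q\equiv1-\nu$; nothing in the functional equation alone excludes subsequential limits equal to $0$ (or rules out non-convergence). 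This is precisely the direct half of Karamata's theorem, and it needs a real input: either a Potter-type bound $F'(s\eps)/F'(\eps)\leq Cs^{-\nu-\delta}$ (obtained by iterating the uniform convergence on $[1,2]$) to justify dominated convergence in $q(\eps)^{-1}=\int_0^1 F'(s\eps)/F'(\eps)\,ds$, or --- much more simply --- the L'H\^opital identity $\lim F(\sigma\eps)/F(\eps)=\sigma\frakF(\sigma)$, which when combined with your step 1 gives $q(\eps)\to(\sigma^{1-\nu}-1)/I_\sigma=1-\nu$ immediately (and $\to0$ when $\nu=1$). With that one repair the rest of your plan (the Ces\`aro iteration for the log-asymptotic, the slowly varying case $\nu=1$, the scale invariance and the sign flip) goes through; but note that once you invoke L'H\^opital you could bypass most of this machinery, as the paper does.
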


\begin{proof}
(a)  Since $f$ is increasing and convex, $F$ is increasing and concave, so $F'$ is positive and decreasing. This implies $\frakF\leq1$.
By L'H\^{o}pital's rule, 
\begin{equation}
 \label{eqn:limit frakF new}
\lim_{\epsilon\to 0} \frac{F(\sigma\epsilon)}{F(\epsilon)}=\sigma\lim_{\epsilon\to 0} \frac{F'(\sigma\epsilon)}{F'(\epsilon)}=\sigma\mathfrak{F}(\sigma).
\end{equation}
Now $F$ is increasing and $\sigma\geq1$, so 
$\frac{F(\sigma\epsilon)}{F(\epsilon)}\geq1$, which implies $\sigma\mathfrak{F}(\sigma)\geq1$.

Integration yields the inequalities for $C_f$.

(b)
 The inverse functions $F,\Phi$ of $f,\phi$ are related by $\Phi=\expbar^{-1} \circ F \circ\expbar$, or $F=\expbar\circ \Phi\circ\expbar^{-1}$, i.e.\ $F(\eps)=\exp(-\Phi(\log\frac1\eps))$. From this and \eqref{eqn:limit frakF new} we get for each $s\geq0$
$$ \lim_{w\to\infty}\left( \Phi(w+s) - \Phi(w) \right) = \log \left(\sigma \frakF(\sigma)\right)\,,\quad 
\sigma=e^s\,.
$$  
This means that $\Phi(w)$ has approximate linear growth as $w\to\infty$ at the rate $\frac1s\log \left(\sigma \frakF(\sigma)\right)$. Now let $f=O(\ftilde)$. Since the existence and value of $\frakF$ are not changed when multiplying $f$ by a constant, we may assume
$f\leq\ftilde$ near zero. This implies $\phi\geq\tilde{\phi}$, so $\Phi\leq\tilde{\Phi}$ near infinity. Assume for contradiction that $\frakF(\sigma)>\frakFtilde(\sigma)$ for some $\sigma>1$ (they are equal to one for $\sigma=1$). Then $\Phi$ would have a higher linear growth rate than $\tilde{\Phi}$, so $\Phi(w)>\tilde{\Phi}(w)$ for large $w$, contradicting $\Phi\leq\tilde{\Phi}$.

\end{proof}
For the examples in \eqref{eqn:examples table} we now get:
\begin{itemize}
 \item If $f(x)=x^\alpha$ then $\frakF(\sigma)= \sigma^{\frac1\alpha-1}$, 
hence 
$C_{f}= B(1-\tfrac1{2\alpha},\tfrac12)$
where $B$ is the Beta function.
 \item In particular, in the conical case $f(x)=x$ we have
 $\frakF(\sigma)= 1$, so $C_f= \pi$, and for \lq sharp cusps\rq, i.e.\ $f(x)=x^\alpha$ with $\alpha\to\infty$, we get $\frakF(\sigma)\to\frac1\sigma$, so $C_f\to2$.
 \item 
The monotonicity \eqref{eqn:frakF comparison} now implies that $\frakF(\sigma)=\frac1\sigma$, so $C_f=2$, for the examples in the second and third row of \eqref{eqn:examples table}.
\end{itemize}
\begin{Rem}
The monotonicity formula \eqref{eqn:frakF comparison} shows that $C_f=2$ for any worse singularity than the ones in \eqref{eqn:examples table}. For instance, define $\exp_{k+1}=\exp\circ \exp_k$ with $\exp_0(x)=x$. Let $k\geq 0$. Then 
\[f(x)=\exp\left(-\exp_k\left(\frac{1}{x}\right)\right)\]
satisfies \eqref{eqn:props f} and by \eqref{eqn:frakF comparison}, $C_f=2$ for any $k\geq 0$. 
\end{Rem}

\subsection{Example of $\mathfrak{F}$ failing to exist}
The idea is to add a fast but suitably small oscillation to $F=f^{-1}$, preventing convergence. We perturb a family $F(x)$ tending to $0$ as $x^{\alpha}$ for $\alpha\in (0,1)$.
\begin{Lem}
Let $\alpha\in (0,1)$ and $c\geq \frac{2-\alpha}{1-\alpha}\cdot \frac{1+\alpha}{\alpha}$. Define
\[F(x)=x^{\alpha}\left(c+\sin(\log(x))\right)= cx^\alpha +\tfrac1{2i} x^{\alpha+i}-\tfrac1{2i} x^{\alpha-i}.\]
Here $i=\sqrt{-1}$.
Then $F$ is positive, increasing and concave, but $\lim\limits_{\epsilon\to 0} \frac{F'(\epsilon\sigma)}{F'(\epsilon)}$ fails to exist unless $\log(\sigma)= 2n\pi$ for some $n\in \mathbb{N}_0$.
\label{Lem:FNonConv}
\end{Lem}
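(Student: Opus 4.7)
The plan is to split the argument into (i) a direct calculus verification of the regularity statements using the hypothesis on $c$, and (ii) a short periodicity argument showing that the ratio $F'(\epsilon\sigma)/F'(\epsilon)$ fails to converge as $\epsilon\to 0$ unless $\log\sigma$ is a multiple of $2\pi$.

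First I would compute
\[F'(x)=x^{\alpha-1}G(\log x),\qquad G(t)\coloneqq \alpha c+\alpha\sin t+\cos t,\]
\[F''(x)=x^{\alpha-2}\bigl[\alpha(\alpha-1)c+(\alpha^2-\alpha-1)\sin(\log x)+(2\alpha-1)\cos(\log x)\bigr].\]
Positivity of $F$ follows from $c+\sin(\log x)\geq c-1>0$, where $c>1$ because $(2-\alpha)(1+\alpha)=2+\alpha-\alpha^2$ exceeds $\alpha(1-\alpha)=\alpha-\alpha^2$. For $F'>0$, the crude bound $|\alpha\sin t+\cos t|\leq \alpha+1$ reduces the claim to $c>1+1/\alpha$, and comparing this with the hypothesis amounts to $2-\alpha>1-\alpha$. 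For concavity, since $\alpha(\alpha-1)<0$, $F''\leq 0$ is equivalent to
\[(\alpha^2-\alpha-1)\sin t+(2\alpha-1)\cos t\leq \alpha(1-\alpha)c,\]
and the left side is bounded above by $|1+\alpha-\alpha^2|+|2\alpha-1|\leq(1+\alpha-\alpha^2)+1=(2-\alpha)(1+\alpha)$, which is exactly $\alpha(1-\alpha)$ times the stated lower bound for $c$.

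For the non-existence of the limit, I would set $s\coloneqq\log\sigma$ and observe
\[\frac{F'(\epsilon\sigma)}{F'(\epsilon)}=\sigma^{\alpha-1}\cdot\frac{G(\log\epsilon+s)}{G(\log\epsilon)}.\]
The function $t\mapsto G(t+s)/G(t)$ is continuous and $2\pi$-periodic, with denominator bounded away from zero by the monotonicity step, so the limit as $\log\epsilon\to-\infty$ exists if and only if this function is constant in $t$. Writing $G(t)=\alpha c+A\sin(t+\phi)$ with $A=\sqrt{\alpha^2+1}$, the requirement $G(t+s)=\lambda G(t)$ for all $t$ forces $\lambda=1$ upon matching constant terms, and then $\sin(t+\phi+s)=\sin(t+\phi)$ for all $t$, hence $s\in 2\pi\Z$; combined with $\sigma\geq 1$ this gives $s=2n\pi$ for some $n\in\N_0$. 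Conversely, for such $s$ the $2\pi$-periodicity of $G$ makes the ratio identically equal to $\sigma^{\alpha-1}$, so the limit exists.

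The main obstacle is the sign bookkeeping in the concavity computation; the hypothesis on $c$ is tight exactly when one bounds $|(\alpha^2-\alpha-1)\sin t+(2\alpha-1)\cos t|$ by the sum of the coefficient magnitudes, so any sloppier estimate would not match the stated lower bound. The non-convergence step, by contrast, is an immediate consequence of the linear independence of $\sin$ and $\cos$ inside the periodic function $G$.
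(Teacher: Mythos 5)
Your proposal is correct. The regularity part (the formulas for $F'$, $F''$, the bound $c>\frac{1+\alpha}{\alpha}$ for monotonicity, and the estimate $(1+\alpha-\alpha^2)+|2\alpha-1|\leq(2-\alpha)(1+\alpha)\leq\alpha(1-\alpha)c$ for concavity) is essentially identical to the paper's computation. Where you diverge is the non-convergence step: the paper first invokes L'H\^{o}pital's rule to argue that existence of $\lim_{\epsilon\to0}F'(\sigma\epsilon)/F'(\epsilon)$ would force existence of $\lim_{\epsilon\to0}F(\sigma\epsilon)/F(\epsilon)$, which has the simpler form $\sigma^\alpha\frac{c+\sin(\log\epsilon+\log\sigma)}{c+\sin\log\epsilon}$, and then tests along the explicit sequence $x_n=\frac{\pi}{2}+n\pi$ to extract the condition $\cos(\log\sigma)=1$. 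You instead work directly with $F'$, writing the ratio as $\sigma^{\alpha-1}G(\log\epsilon+s)/G(\log\epsilon)$ with $G(t)=\alpha c+\alpha\sin t+\cos t$, and use the general fact that a continuous $2\pi$-periodic function has a limit at $-\infty$ only if it is constant, followed by linear independence of $1$, $\sin$, $\cos$ to force $s\in2\pi\Z$. Your route avoids L'H\^{o}pital entirely and is arguably cleaner conceptually; the paper's route buys a simpler trigonometric expression at the cost of the extra reduction to $F$. Both converse directions (periodicity gives the constant ratio $\sigma^{\alpha-1}$ when $\log\sigma\in2\pi\mathbb{N}_0$) agree.
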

\begin{proof}
We compute
\[F'(x)= x^{\alpha-1}\left(\alpha c+\alpha\sin\log x+\cos\log x\right), \]
\[F''(x)= x^{\alpha-2}\left(\alpha(\alpha-1)c+(\alpha^2-\alpha -1)\sin\log x+(2\alpha-1)\cos\log x\right). \]
$F'(x)>0$ clearly holds if $c>\frac{1+\alpha}{\alpha}$, which is true since $c\geq \frac{2-\alpha}{1-\alpha}\cdot \frac{1+\alpha}{\alpha}$. Similarly, since $\alpha^2-\alpha-1<0$ and $|2\alpha-1|<1$,
\[F''(x)\leq x^{\alpha-2}\left(\alpha(\alpha-1)c-(\alpha^2-\alpha-1)+1\right)\leq 0\]
if $c\geq \frac{2-\alpha}{1-\alpha}\cdot \frac{1+\alpha}{\alpha}$.

To see that the limit fails to exist, assume it did. Then, by L'H\^{o}pital's rule, the limit $\lim\limits_{\epsilon\to 0} \frac{F(\epsilon\sigma)}{F(\epsilon)}$ would also exist. But
\begin{align*}
\frac{F(\epsilon\sigma)}{F(\epsilon)}=\sigma^{\alpha}\frac{c+\sin\log(\epsilon\sigma) }{  c+\sin\log\epsilon }=\sigma^{\alpha} \frac{c+\sin\left(\log\epsilon+\log(\sigma)\right)}{c+\sin\log\epsilon},
\end{align*}
and the existence of the limit as $\epsilon\to 0$ is equivalent to the existence of the limit
\[\lim_{x\to \infty} \frac{c+\sin(x+y)}{c+\sin(x)}\]
for all $y=\log(\sigma)\geq 0$. Consider the sequence $x_n=\frac{\pi}{2}+n\pi$. Then
\[\frac{c+\sin(x_n+y)}{c+\sin(x_n)}=\frac{c+(-1)^n \cos y}{c+(-1)^n},\]
and this converges only if $\cos y=1$, i.e.  $y=\log(\sigma) \in 2\pi \mathbb{N}_0$. The converse follows from
\[\frac{F'(\sigma\epsilon)}{F'(\epsilon)}=\sigma^{\alpha-1}\]
whenever $\log(\sigma)\in 2\pi \mathbb{N}_0$.
\end{proof}


We formulate a small question concerning $\mathfrak{F}$.
\begin{Quest}
Assume $f$ satisfies \eqref{eqn:props f}, and additionally, that
  \[\lim_{x\to 0} \frac{f(x)}{x^{\alpha}}=0\]
for all $\alpha\geq 1$. 

 Does 
$\mathfrak{F}$ exist and is $\mathfrak{F}(\sigma)=\frac{1}{\sigma}$? 

\end{Quest}

\subsection{Why convexity?} \label{subsec:why convexity}
One possible extension of our work is to relax the assumptions on $f$. One could for instance ask what happens when $f$ is concave instead of convex? What one will notice then is that the bounds on $\mathfrak{F}$ are no longer true, and $C_f$ might very well be infinite. When $f(x)=x^{\alpha}$ with $0<\alpha<1$, $\mathfrak{F}(\sigma)=\sigma^{\frac{1-\alpha}{\alpha}}$ is unbounded, and $C_f$ is infinite for $\alpha\leq \frac{1}{2}$.
The border case $\alpha=\frac{1}{2}$ can be explicitly solved in the warped case. The formula \eqref{eq:LengthMeasureWarped} holds for arbitrary positive and increasing $f$, hence also for $f(x)=\sqrt{x}$. Here $F(\rho)=\rho^2$, so
\[f'(\delta)\ell(y_{\delta})=\int_{-\Theta_{\delta}}^{\Theta_{\delta}} \frac{d\theta}{\cos\theta},\]
where $\cos\Theta_{\delta}=\sqrt{\frac{\delta}{R}}.$ The integral is explicitly computable, and the result is
\[f'(\delta)\ell(y_{\delta})=2 \log\left(\frac{R+\sqrt{R^2-\delta^2}}{\sqrt{\delta}}\right).\]
The essential point here is that the length behaves as 
\[f'(\delta)\ell(y_{\delta}) \sim \log\delta^{-1},\]
so the asymptotic behaviour has become more complicated.

\section{Metrics on isolated singularities}\label{sec:singularities}

Singular spaces typically arise in one of the following ways:
\begin{itemize}
 \item
as subsets of smooth manifolds $M$, e.g. as solution sets $\{x\in M:F(x)=0\}$ of (systems of) equations, where $F:M\to N$ is a smooth map to a manifold $N$ and singularities may arise at points $p$ where the differential $dF_{|p}$ is not surjective; an important class of examples are algebraic varieties;
\item  
as quotients of smooth spaces by non-free group actions; important classes of examples are orbifolds and moduli spaces.
\end{itemize}
These spaces are often equipped with natural metrics. For example, complex projective algebraic varieties carry metrics induced by the Fubini-Study metric, and the Riemannian moduli space carries a natural metric, which has (non-isolated) cuspidal singularities.

However, in this section we carefully distinguish non-metric and metric aspects of singularities. The discussion will be summarized in Subsection \ref{subsec:summary sing}.
\subsection{Singularities of type $[s]$}

We focus on singular spaces arising as subsets of manifolds and only consider isolated singularities which are described by a profile (or 'shrinking') function $s$ of a single variable, in the sense described below. First, we define:
\begin{Def}\label{def:singularity}
 Let $M$ be a manifold and $X\subset M$. Let $p\in X$.
We say that 
 $p$ is an \textbf{isolated singularity} of $X$ if $X_0=X\setminus\{p\}$ is a submanifold of $M$ and $p$ lies in the closure of $X_0$.
\end{Def}
  We define a \textbf{profile function} to be a $C^1$ function $s:[0,\eps)\to[0,\infty)$ for some $\eps>0$ which satisfies
 \[ s(0)=0\,,\ s(z)>0\text{ for }z>0\,.\]
 We call two profile functions $s,\stilde$ \textbf{equivalent} if $\stilde = as$
 near $0$, where $a$ is positive and $C^1$ on some interval $[0,\epsilon)$. Denote the equivalence class of $s$ by $[s]$.

Let us say that two curves $\gamma$, $\gammabar$ in $M$ with $\gamma(0)=\gammabar(0)=p$ and $\dot{\gamma}(0)\neq 0\neq \dot{\overline{\gamma}}(0)$ are \textbf{$s$-tangent at $p$} if $|\gamma(t)-\gammabar(t)|=O(s(t))$ near $t=0$ in one (hence any) local coordinate system. If $s(z)=z^k$, $k\in\N$, then this corresponds to tangency of order $k-1$. Roughly, we will say that $X$ has a singularity of type $s$ at $p$ if it is a union of curves which are pairwise $s$-tangent at $p$, but not tangent to higher order (i.e.\ also    $|\gamma(t)-\gammabar(t)|\geq cs(t)$ for a constant $c>0$).
\subsubsection{The cuspidal case}
We call a profile function $s$ \textbf{cuspidal} if $s'(0)=0$.
For a cuspidal profile function $s$ the \textbf{$s$-blow-down map} $\beta_s$ is defined by
\begin{equation}
\label{eqn:def betas} 
 \beta_s:[0,\eps)\times  \R^{N-1}\to [0,\eps)\times \R^{N-1}\,,\quad (z,u) \mapsto (z,s(z)u)
\end{equation}
It shrinks the hyperplane at height $z$ by the factor $s(z)$. In particular the boundary plane $z=0$ is collapsed to the origin.
\begin{Def}\label{def:cusp singularity}
 Let $M$ be a manifold and $X\subset M$. Let $p\in X$, and let $s$ be a cuspidal profile function. We say that $X$ has a \textbf{cuspidal singularity of type $[s]$} at $p$ if there are coordinates $(z,x):U\to\R\times\R^{N-1}$ on a neighbourhood $U\subset M$ of $p$, mapping $p$ to the origin, in terms of which
\begin{equation}
 \label{eqn:sing of type s}
X\cap U=\beta_s(\Xtilde)\,,\quad \Xtilde \coloneqq  \bigcup_{z\in[0,\eps)} \{z\} \times Y_z
\end{equation}
where $\beta_s$ is the $s$-blow-down map 
and   $Y_z\subset\R^{N-1}$ are closed submanifolds varying smoothly\footnote{That is,  $Y_z=\iota(Y,z)$ for a fixed manifold $Y$, with $\iota:Y\times[0,\eps)\to\R^{N-1}$ smooth and
$\iota(\cdot,z)$ an embedding for each $z$. Equivalently, $\Xtilde$ is a p-submanifold of $\R^{N-1}\times [0,\infty)$, see \cite{Mel:DAMWC} or \cite{Gri:BBC}.}
with $z\in [0,\eps)$. We call $\Xtilde$ a \textbf{resolution} of $X$.

The curve $z\mapsto (z,0)$ is called  \textbf{axis} of the singularity. We also call the germ of $X\subset M$ at $p$ a singularity of type $[s]$.
\end{Def}

These singularities are natural generalizations of surfaces of rotation in $\R^3=\R\times\R^2$ with profile function $s$, for which $Y_z=S^1\subset\R^2$ for each $z$ (see also Example \ref{ex:surf revol}). Compared to isolated singularities of algebraic sets they allow more flexibility in that $s$ can vanish to infinite order at zero, but are also more restricted since the blow-down map shrinks each point $u\in Y_z$ by the same factor $s(z)$. For example, the set $\{(z,x,y):\, (\frac xz)^2 + (\frac y{z^2})^2 = 1\,,\ z>0\}\cup\{0\}\subset\R^3$ is not of this type. See \cite{GraGri:GSS} for more on this.

The singularity type is an equivalence class $[s]$ because equivalent profile functions define the same class of singularities: if $\stilde =as$ then $s, (Y_z)$ define the same $X$ as $\stilde, (a(z)^{-1}Y_z)$.

It follows from the cusp condition $s'(0)=0$ that a cuspidal singularity $(X,p)$ has a well-defined \textbf{tangent direction} at $p$, which is a ray in $T_pM$. This is the set of tangents at $p$ of curves in $X$ starting at $p$, and is equal to the tangent cone in the sense of \cite{BerLyt:TSGHLSS}, for instance.

Note that this notion of singularity type is purely differential, i.e.\ there is no metric involved. It is also natural in the sense that if the condition holds in one coordinate system then it holds in any other having the same axis.\footnote{We don't make use of this. Here is a sketch of the proof:
(We assume here for simplicity that everything is smooth.)
We need to show that any diffeomorphism $\Phi:\R^{N}\to\R^N$ which fixes the $z$-axis pointwise lifts to a diffeomorphism under $\beta_s$, locally near zero. By the argument in \cite[Proof of Lemma 2]{Mel:RBIASS} this reduces to 
 showing that the vector fields generating these diffeomorphisms lift to smooth vector fields under $\beta$. These vector fields are spanned over $C^\infty$ by the vector fields $x_i\partial_{x_j}$ and $x_i\partial_z$ for $i,j=1,\dots,N-1$. A simple calculation shows that $\beta_*(U_{ij})= x_i\partial_{x_j}$ for $U_{ij}=u_i\partial_{u_j}$ 
 and $\beta_*(Z_i)= x_i\partial_z$ for $Z_i=s u_i \partial_z - s'u_i \sum_j u_j\partial_{u_j}$, so $U_{ij}$, $Z_i$ are the desired smooth lifts. 
 
 A fine point is that the axis is only determined by $X$ up to a perturbation of order $s$.
}

\subsubsection{Conical singularities} \label{subsubsec:conical sing}
 Conical singularities can be defined in essentially the same way, where the profile function is $s(z)=z$ or equivalent to this. However, while cuspidal singularities always lie in a half space since they have a unique tangent direction at $p$, it would be unnatural to require this for a cone. 
 Therefore, we \textbf{define a conical singularity} in the same way as in Definition \ref{def:cusp singularity}, except that we replace the map $\beta_s$ above by the polar coordinates map
\begin{equation}
\label{eqn:beta conical}
 \betaconic: [0,\infty) \times \S^{N-1}\to\R^N\,,\quad (z,\omega)\mapsto z\omega 
\end{equation}
and take  a smooth family of closed submanifolds $Y_z\subset\S^{N-1}$. 
The space $[0,\infty) \times \S^{N-1}$ is called the \textbf{blow-up} of $\R^N$ at the point $0$ and $\betaconic$ the blow-down map, and the definition can be restated by saying that $X$ is resolved by blowing up $p$. The map $\beta_s$ in \eqref{eqn:def betas}, with $s(z)=z$,  then is $\betaconic$ written in one of the projective coordinate systems. See \cite{Mel:DAMWC} or \cite{Gri:BBC} for details on this.

If $s(z)=z^k$ then the map $\beta_s$ is the blow-down map for a quasi-homogeneous blow-up (in projective coordinates; see \cite{Gri:SBQC}, \cite{KotMel:GBCFP}), so for general $s$ we have defined a generalized notion of quasi-homogeneous blow-up.

The other remarks above carry over to the conical case. In particular, this notion of conical singularity is  differential, not metric.

\subsection{Metrics on singularities of type $[s]$}
\label{subsec:bl-up}

 It is natural to consider metrics on $X$ which are induced by smooth metrics on $M$.
 
\begin{Def} \label{def:induced distance}
Let  $(M,g_M)$ be a smooth Riemannian manifold and $X\subset M$.
Assume that $X$ has an isolated singularity at $p\in X$. The \textbf{induced metric}  for $X$, denoted $g_X$, is the Riemannian metric on $X_0=X\setminus\{p\}$ obtained by restriction of $g_M$ to $TX_0$. 
The \textbf{induced distance} on $X$, denoted $d_X$, is the distance function on $X$ defined by $g_X$.
\end{Def}
That is, if  $q,q'\in X$ then $d_X(q,q')=\inf \ell(\gamma)$ where the infimum is taken over all curves $\gamma:(0,1)\to X_0$ with $\lim_{t\to0}\gamma(t)=q$, $\lim_{t\to1}\gamma(t)=q'$.
This is sometimes called the intrinsic distance on $X$, and is to be distinguished from the extrinsic distance, which is the restriction of the distance function $d_M$ (defined on $M\times M$ by $g_M$) to $X\times X$.

To understand the geometry of $(X,g)$, e.g.\ the behaviour of geodesics near $p$, it is useful to have a normal form of the metric. Specifically, we ask under which conditions the induced distance makes $X$ a Riemannian space with conical or cuspidal singularity at $p$ as in Definition \ref{def:cone cusp}. This turns out to be rather subtle even for spaces with cuspidal or conical singularity as defined above. We discuss this now.

\subsubsection{Warped products}
In order to get a warped product metric on $X$ (rather than just a generalized one) one expects to have to impose rather rigid conditions. We will consider the case where $M=\R^N$ with the Euclidean metric, and where the resolution $\Xtilde$ (defined using standard coordinates $(z,x)$ on $\R^N$) is a product $[0,\eps)\times Y$.

\begin{Prop} \label{prop:warped example}
Let $X\subset \R^N$ have an isolated singularity at $0$. Let $g_X$ be the metric on $X_0$ induced by the Euclidean metric on $\R^N$, and $d_X$ the induced distance.
\begin{enumerate}[label=(\alph*)]
 \item (Cuspidal case)
 If $X$ has a cuspidal singularity of type $[s]$, and if, in \eqref{eqn:sing of type s},

\begin{equation}
 \label{eqn:cond warped cusp}
 Y_z=Y \ \forall z\ \text{ where } Y\subset \R^{N-1} \text{ is contained in the unit sphere } \S^{N-2} \text{ centered at }0
\end{equation}
then $g_X$ is a warped product metric. The warping function is determined by
\begin{equation}
 \label{eqn:def r}
  f(r(z))=s(z)\ \text{ where }\ r(z)=\int_0^z \sqrt{1+(s'(w))^2}\,dw\,,
\end{equation}
and $h$ is the metric on $Y$ induced by the Euclidean metric on $\R^{N-1}$.
Also, $f(r)\sim s(r)$ as $r\to0$, and $f$ is convex iff $s$ is convex, so $(X,d_X)$ is a Riemannian space with an isolated cuspidal singularity in this case.
\item (Conical case)
If $X$ has a conical singularity, and if
\begin{equation}
 \label{eqn:cond warped cone}
 Y_z=Y \ \forall z \ \text{ for some $Y\subset \S^{N-1}$}
\end{equation}
then $g_X$ is a warped product metric with warping function $f(r)=r$, so $(X,d_X)$ is a Riemannian space with an isolated conical singularity. The metric $h$ is the metric on $Y$ induced by the standard metric on $\S^{N-1}$.
\end{enumerate}
\end{Prop}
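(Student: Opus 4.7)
The plan is to pull back the ambient Euclidean metric under the natural parametrization of $X$ furnished by the resolution, and then reparametrize the radial direction by arc-length.

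For part (a), I would use coordinates as in Definition \ref{def:cusp singularity} and parametrize
\[\Phi:[0,\varepsilon)\times Y \to X\subset\R^N,\qquad (z,u)\mapsto (z,s(z)u),\]
where $Y\subset\S^{N-2}$ by assumption \eqref{eqn:cond warped cusp}. The differential sends $\partial_z$ to $(1,s'(z)u)$ and a tangent vector $v\in T_uY$ to $(0,s(z)v)$. The key observation is that $Y$ lies on the unit sphere, so $v\perp u$ in $\R^{N-1}$, which kills the Euclidean cross term $s(z)s'(z)\langle u,v\rangle$. Thus the pullback decouples as
\[\Phi^*\geucl = (1+s'(z)^2)\,dz^2 + s(z)^2\,h,\]
with $h$ the metric on $Y$ induced from $\R^{N-1}$. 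Introducing the radial arc-length $r(z)\coloneqq \int_0^z\sqrt{1+s'(w)^2}\,dw$, which is $C^1$ and strictly increasing (hence invertible) on $[0,\varepsilon)$ and satisfies $dr^2=(1+s'(z)^2)\,dz^2$, the metric becomes $dr^2 + f(r)^2 h$ in the new variable, with $f(r)\coloneqq s(z(r))$.

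The two side claims are calculus exercises. Differentiating $f(r(z))=s(z)$ twice and simplifying yields
\[f'(r(z)) = \frac{s'(z)}{\sqrt{1+s'(z)^2}},\qquad f''(r(z)) = \frac{s''(z)}{(1+s'(z)^2)^2},\]
so $f$ is convex iff $s$ is. For the asymptotic, the cuspidal condition $s'(0)=0$ gives $\sqrt{1+s'(w)^2}\to 1$ as $w\to 0$, hence $r(z)/z\to 1$, equivalently $z(r)/r\to 1$; substituting, $f(r)=s(z(r))\sim s(r)$ in the sense appropriate to the regularity of the profile $s$.

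Part (b) follows by the same recipe, but more cleanly. Parametrizing $\Phi(z,u)=zu$ for $u\in Y\subset\S^{N-1}$, one has $d\Phi(\partial_z)=u$ and $d\Phi(v)=zv$ for $v\in T_uY$, and $\langle u,v\rangle=0$ in $\R^N$ because $Y\subset\S^{N-1}$. Thus $\Phi^*\geucl = dz^2 + z^2 h$ at once; setting $r=z$ and $f(r)=r$ exhibits the warped product form of a conical singularity, and $f(r)=r$ is trivially convex. The whole proof is essentially a coordinate calculation; the only mild subtlety is the asymptotic $f\sim s$, which relies on the cusp condition $s'(0)=0$ making the radial arc-length reparametrization asymptotically trivial near $0$.
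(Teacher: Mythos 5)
Your proposal is correct and follows essentially the same route as the paper: pull back the Euclidean metric under the (blow-down) parametrization, use $Y\subset\S^{N-2}$ (resp.\ $\S^{N-1}$) to kill the mixed $dz\,du$ term and normalize the $dz^2$ coefficient, and then pass to the radial arc-length $r(z)$. The one small wrinkle is your convexity argument via $f''(r(z))=s''(z)/(1+s'(z)^2)^2$: profile functions are only assumed $C^1$, so $s''$ need not exist; the paper instead observes that $f'(r(z))=s'(z)/\sqrt{1+s'(z)^2}$ and that both $z\mapsto r(z)$ and $\sigma\mapsto\sigma/\sqrt{1+\sigma^2}$ are strictly increasing, so $f'$ is non-decreasing iff $s'$ is, which gives the equivalence of convexity without any second derivatives.
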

Note that in the cuspidal case there is an additional condition, $Y\subset \S^{N-2}$, on the cross section $Y$, while in the conical case there isn't. See also the remark after Theorem \ref{thm:gragri cuspidal}. 
Condition \eqref{eqn:cond warped cone} says that $X$ is a \lq straight\rq\ cone with an arbitrary base $Y$.

The idea of the proof of part (a) is that $r(z)$ is arc length along the curve $z\mapsto (z,s(z)u)$ for any fixed $u$ with $|u|=1$. The unit sphere condition is needed to ensure that mixed terms vanish, i.e.\ that the curves $u= \const.$ are orthogonal to the level sets $z= \const$.

\begin{proof}
\begin{enumerate}[label=(\alph*)]
 \item
  We calculate the induced Riemannian metric on $X_0=X\setminus\{0\}$. 
 First, we write the Euclidean metric on $\R^N$  in coordinates $z,u$, i.e.\ we pull it back under the map $\beta_s$, see \eqref{eqn:def betas}. The differential of $s(z)u_i$ is
 $$ d(s(z)u_i) = s(z)\,du_i + s'(z)u_i\,dz \,,$$
so we get
\begin{align*}
 \beta_s^*(\geucl) &= dz^2 + \sum_{i=1}^{N-1} \left(d(su_i)\right)^2 \\
 &= (1+(s')^2 |u|^2)\,dz^2 + 2 ss' dz \sum_{i=1}^{N-1}u_i\,du_i + s^2 \sum_{i=1}^{N-1} du_i^2 
\end{align*}
where $s=s(z)$ etc.
Now we restrict to $Y$. From $Y\subset \S^{N-2}$ we have that $|u|^2$ is constant equal to 1 on $Y$. In particular, the mixed term, which is $ss'\,dz\,d(|u|^2)$, vanishes when pulled back to $Y$, so we get
$$ g_{X_0} = (1+(s')^2)\,dz^2 + s^2 \,h\,. $$
 With $r$ defined in \eqref{eqn:def r} we have $dr = \sqrt{1+(s')^2} \,dz$ and $s(z)=f(r)$. So in $r,u$ coordinates the metric is $dr^2 + f(r)^2 \,h$ as claimed.

From $s'(0)=0$ we get $r(z)\sim z$ as $z\to0$, hence $f(r)\sim s(r)$ as $r\to0$. Finally, one calculates
$$ f'(r(z)) = \frac{s'(z)}{\sqrt{1+(s'(z))^2}} $$
and since the function $z\mapsto r(z)$ as well as the function $\sigma\mapsto \frac\sigma{\sqrt{1+\sigma^2}}$ (and hence also their inverses) are strictly increasing, it follows that $f'$ is increasing iff $s'$ is increasing, so $f$ is convex iff $s$ is convex.
\item
It is well-known that the Euclidean metric on $\R^N$ reads
$\betaconic^*(\geucl) = dr^2 + r^2 g_{\S^{N-1}}$ in polar coordinates (where we write $r$ instead of $z$ in \eqref{eqn:beta conical}). Since $\tilde{X}$ is $\beta^{-1}(X)=[0,R)\times Y$ in polar coordinates, it follows that $g=dr^2+r^2 h$.
\end{enumerate}
\end{proof}

\begin{Rem}
 \label{rem:more conical}
  Proposition \ref{prop:warped example} implies that our results on geodesics apply in the cuspidal case if \eqref{eqn:cond warped cusp} holds and $s$ is convex, and in the conical case if \eqref{eqn:cond warped cone} holds. 
  There are also other cases where one obtains conical warped product metrics:
 
\begin{enumerate}
 \item[(a)]
\textit{$X$ is as in Proposition \ref{prop:warped example}(a), except that $s'(0)>0$, and $s$ is convex.}
\\
Note in particular the additional condition $Y\subset\S^{N-2}$ in \eqref{eqn:cond warped cusp}. However, this condition is not needed if $s(z)=cz$ with a constant $c>0$ since then $X$ can be expressed as in Proposition \ref{prop:warped example}(b) (with a different $Y$).\footnote{This can also be seen using a calculation as in the proof of Proposition \ref{prop:warped example}(a): when we set $r=\sqrt{1+c^2|u|^2}z$ then $dr^2$ will absorb both the $dz^2$ and the mixed term in $\beta_s^*(\geucl)$, even if $|u|^2$ is not constant on $Y$.
}
\\
Note that if $s'(0)>0$ and $s$ is $C^2$ then $f$ is also $C^2$, so by Remark \ref{rem:sing types} (c) one does not need to require convexity of $s$ then.
 \item[(b)]
 \textit{$X$ is as in Proposition \ref{prop:warped example}(a), except that $s(z)=z^{\alpha}$ with $\alpha\in (0,\frac23]$.}
\\
(Note that $s$ is not even differentiable at zero then.) The reason is that 
\[r(z)=\int_0^z \sqrt{1+\alpha^2 w^{2\alpha-2}}\, dw=\sum_{k=0}^{\infty} \frac{c_k \alpha^{1-2k}}{2k(1-\alpha)+\alpha}z^{2k(1-\alpha)+\alpha}=z^\alpha+\beta z^{2-\alpha}+\dots,\]
where $c_k$ are the coefficients of the Taylor series of $\sqrt{1+x}$ at $x=0$, so $\beta=\frac1{2\alpha(2-\alpha)}>0$. This implies $f(r)=r-\beta r^{\gamma}+\dots$ where $\gamma=\frac{2-\alpha}\alpha$, which is conical but not convex. However, if $\alpha\leq\frac23$ then it is $C^2$ down to $0$, so $f$ is equivalent to a convex function.
\\
For example, if $s(z)=\sqrt z$ and $Y=\S^{N-2}$ then $X$ is in fact smooth at $p$.
\end{enumerate}

\end{Rem}
\begin{Ex} \label{ex:surf revol}
If $X\subset\R^3$ is a surface of revolution with profile function $s$ then it has a cuspidal warped product metric if $s$ is cuspidal and a conical warped product metric if $s$ is $C^1$ at zero and $s'(0)>0$, or if $s(z)=z^\alpha$ with $\alpha\in(0,\frac23]$.
\end{Ex}

\subsubsection{Generalized warped products} \label{section: non-warped example} 
 If we use more general metrics on $M$ near $p$ or do not impose the product type condition $Y_z=Y\,\forall z$ then we cannot expect the induced metric on $X$ to have warped product structure. We discuss what is known in the conical and $k$-cuspidal case, i.e. $f(r)=r^k$ for some $k\in\N$, $k\geq1$.

First,  in the conical case one always gets a generalized warped product, as shown by Melrose and Wunsch.
\begin{Thm}[\cite{MeWu:Geo}]\label{thm:MeWu}
 Let $X\subset M$ have a conical singularity as defined above. Let $g_M$ be a Riemannian metric on $M$ and $d_X$ the induced distance function on $X$.
 
 Then $(X,d_X)$ is a Riemannian space with an isolated conical singularity at $p$, as in Definition \ref{def:cone cusp}.
\end{Thm}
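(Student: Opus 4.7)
The plan is to compute the pullback of $g_M$ to the resolution $\Xtilde=[0,\epsilon)\times Y$ and then change coordinates on $\Xtilde$ to put $g_X$ in the warped-product normal form of Definition~\ref{def:cone cusp}.

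\emph{Step 1 (Pullback to $\Xtilde$).} I would choose Riemannian normal coordinates on $M$ centered at $p$, so that $g_M=\geucl+O(|x|^2)$. In polar coordinates $(z,\omega)$ on $\R^N$ this gives $\betaconic^*g_M=dz^2+z^2 g_{\S^{N-1}}+z^2 E$ where $E$ is a smooth symmetric $2$-tensor on $[0,\epsilon)\times\S^{N-1}$. By hypothesis $\Xtilde=\bigcup_z\{z\}\times Y_z$ with $Y_z=\iota(Y,z)\subset\S^{N-1}$ a smooth family of closed submanifolds. Restricting the pullback metric to $\Xtilde$ in coordinates $(z,y)$ yields
\[
g_{\Xtilde}=A(z,y)\,dz^2+2z\,B_i(z,y)\,dz\,dy^i+z^2H_{ij}(z,y)\,dy^i dy^j,
\]
with $A,B_i,H_{ij}$ smooth on $[0,\epsilon)\times Y$, $A=1+O(z^2)$, and $H_{\cdot\cdot}(0,y)$ the Riemannian metric on $Y$ induced from $g_{\S^{N-1}}$ via $y\mapsto\iota(y,0)$. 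The factor $z$ in the cross term is geometric: it comes from $\partial_{y^i}\betaconic=z\,\partial_{y^i}\iota$ combined with the unit-sphere identity $\iota\cdot\partial_{y^i}\iota=0$.

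\emph{Step 2 (Gauss-type coordinate change).} The next step is to replace $(z,y)$ by coordinates $(r,\bar y)$ in which the cross terms vanish and the radial part is $dr^2$. The natural choice is $r(q):=d_X(p,q)$. The required facts are:
(i) $r$ extends continuously by $r(p)=0$ and is smooth on $X_0$ in a punctured neighborhood of $p$;
(ii) $r(z,y_0)=z+O(z^3)$ along each radial curve $\{y=y_0\}$, and more generally $r(z,y)/z\to 1$ uniformly as $z\to 0$;
(iii) $|\nabla r|_{g_X}=1$ on $X_0$ near $p$ (eikonal equation/Gauss lemma), and the level sets $\{r=\const\}$ are diffeomorphic to $Y$.
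Given these, flowing a fixed cross section $\{r=r_0\}$ along the integral curves of $\nabla r$ produces coordinates $(r,\bar y)\in[0,R)\times Y$ in which $g_X=dr^2+k(r,\bar y,d\bar y)$ with no $dr\,d\bar y$ terms.

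\emph{Step 3 (Identification of the warping function).} A Taylor expansion using Step~1 and (ii) shows $k(r,\bar y,d\bar y)=r^2\bigl(H_{ij}(0,\bar y)+O(r)\bigr)d\bar y^i d\bar y^j$. Setting $f(r)=r$ and $h_r:=k/r^2$, we obtain $g_X=dr^2+f(r)^2 h_r$, where $h_r$ is a $C^1$ family of Riemannian metrics on $Y$ extending to $r=0$ with $h_0$ non-degenerate. Since $f'(0)=1>0$, this realizes $(X,d_X)$ as a Riemannian space with an isolated conical singularity at $p$ in the sense of Definition~\ref{def:cone cusp}.

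\emph{Main obstacle.} The delicate point is establishing (i)--(iii), in particular the smoothness of $r=d_X(p,\cdot)$ on $X_0$ near the singular point. One cannot directly appeal to the standard smooth exponential map at $p$, since $p$ is singular. The cleanest route is to treat $g_{\Xtilde}$ as a smooth $O(z^2)$ perturbation of the model straight-cone metric $dz^2+z^2 H_{ij}(0,y)\,dy^i dy^j$, for which (i)--(iii) are trivial ($r=z$ and the radial geodesics are $\{y=\const\}$), and to run a perturbative argument either via the geodesic ODE or the Hamilton--Jacobi equation $|\nabla r|^2_{g_X}=1$ with boundary condition $r(p)=0$. Smoothness of the perturbation together with non-degeneracy of $H_{\cdot\cdot}(0,y)$ provides the control needed to carry through this perturbative step and conclude.
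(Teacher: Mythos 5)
Your proposal follows essentially the same route as the paper's sketch of the Melrose--Wunsch argument: first pull back $g_M$ in normal polar coordinates to obtain $dr^2+r^2h$ with extraneous $dr^2$ and $dr\,dy$ terms, then remove them by passing to the fibration by radial geodesics emanating from $p$ (your gradient flow of $r=d_X(p,\cdot)$ is that same fibration). The one hard step --- that these geodesics exist, are unique, and depend smoothly on their limit point on the front face $\{0\}\times Y$ despite the degeneracy of the geodesic ODE at $r=0$ --- is precisely the step the paper calls ``more difficult'' and defers to \cite{MeWu:Geo}, and your proposal likewise only gestures at it, so you reach the same level of completeness as the paper's own account.
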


Theorem \ref{thm:MeWu} is proven in two steps: First one uses normal polar coordinates in $M$, centred at $p$, to write the induced metric as
$g = dr^2 + r^2 h$ on $\tilde{X}=[0,R)\times Y$, where all $Y_z$ are identified with a fixed $Y$ and $h$ is a smooth 2-tensor on $\tilde{X}$ restricting to a metric on $\{0\}\times Y$. However, this is more general than \eqref{eq:Metric} since $h$ may involve $dr^2$ terms and also mixed  $dr\, dy$ terms.
In a second, more difficult step one proves that one can change coordinates so that these terms are removed, i.e.\ $g$ takes the form \eqref{eq:Metric}. For this one shows that for each $q\in Y$ there is a unique geodesic in $(0,R)\times Y$ hitting the boundary at $(0,q)$, and that these geodesics together define a fibration $\Phi:[0,R')\times Y\to U$ of a neighbourhood $U\subset\tilde{X}$ of $r=0$. Then $\Phi$ is the desired coordinate system.
\medskip

The $k$-cuspidal case was analysed by Grandjean and Grieser in \cite{GrGr18}, with more detail and a correction given in \cite{BeyGri:IGIC}.
Let $X$ be given as in \eqref{eqn:sing of type s} with $s(z)=z^k$, $k\geq2$, and let $g_M$ be any Riemannian metric on $M$. First, by \cite[Proposition 7.3]{GrGr18} and \cite{BeyGri:IGIC} a trivialization $U\cong [0,r_0)\times\dXtilde$ of a neighbourhood $U$ of $\dXtilde$ can be chosen so that  the induced metric on $X$, pulled back to $\Xtilde$, has the form
\begin{equation}
 \label{eqn:cusp metric general}
g =(1+S(z,u) z^{k})\,dz^2 +z^{2k}h
\end{equation}
for a smooth function $S$ and a smooth 2-tensor $h$ on $\Xtilde$ restricting to a metric $h_0$ on $\{0\}\times\partial\Xtilde=Y_0$.\footnote{In \cite[Proposition 7.3]{GrGr18} 
this was stated with $z^{2k-2}$ instead of $z^k$, which is a stronger claim in case $k\geq3$. However, the proof of that proposition makes the implicit assumption that the axis of the singularity is tangent to order $k-1$ to the geodesic in $M$ which starts at $p$ in the same direction as the axis. This is trivially satisfied if $k=2$ but need not be true if $k\geq3$. The proof of the correction and further explanation is given in \cite{BeyGri:IGIC}.}

This cannot be improved in general: Consider the subspace $\calN \subset C^\infty([0,r_0)\times\dXtilde)$ of functions $f$ satisfying $d_\dXtilde f = O(z^{k-1})$, and denote by $S_0$ the equivalence class of $S$ modulo $\calN$. For example, if $k=2$ then $S_0$ can be identified with the restriction of $S$ to $\dXtilde$ modulo constants, so it vanishes if and only if $S_{|\dXtilde}$ is constant.
By 
 \cite[Lemma 2.2]{GrGr18} and \cite{BeyGri:IGIC} the class $S_0$ is an invariant of the singularity and metric. In particular, if $S_0$ is non-zero then the metric can not be put in generalized warped product form by any choice of coordinates on $\Xtilde$. However, if $S_0$ vanishes then an argument using geodesics hitting $p$, similar to the argument in  \cite{MeWu:Geo}, shows:
\begin{Thm}[Theorem 1.2 + Remark 3.4 in \cite{GrGr18}] \label{thm:gragri cuspidal}
 Suppose $X$ has a $k$-cuspidal singularity as discussed above. If the equivalence class $S_0$ defined above vanishes then the metric on $X$ is a generalized warped product metric with warping factor $f(r)=r^k$.
\end{Thm}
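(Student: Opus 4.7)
The plan is to adapt the strategy of Melrose--Wunsch used in the proof of Theorem \ref{thm:MeWu}: construct coordinates $(r,y) \in [0, r_0') \times \dXtilde$ on a neighbourhood of $\dXtilde$ in $\Xtilde$ in which the metric \eqref{eqn:cusp metric general} takes the form $dr^2 + r^{2k} h_r$, with $r$ equal to the distance to $p$ and with the new radial vector field tangent to the family of unit-speed geodesics emanating from $p$. The hypothesis $S_0 = 0$ is exactly what is needed to make this family exist and depend smoothly on $y$.

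First I would establish a cuspidal analogue of the exponential map at $p$: for each $y \in \dXtilde$, a unique unit-speed geodesic $\gamma_y : [0,r_0') \to \Xtilde$ with $\gamma_y(r) \to (0,y)$ as $r \to 0$, depending $C^1$ on $y$. Writing out the Hamilton equations associated to \eqref{eqn:cusp metric general} and rescaling as in Section \ref{Section:Hamilton}, this becomes a non-autonomous shooting problem near $z = 0$. The only obstruction to closing a contraction argument is the tangential force $-\tfrac12 \partial_u g_{zz} = -\tfrac12 \partial_u (S z^k)$ in the $\dot{\eta}$-equation: generically this is $O(z^k)$, which is too strong to control the $u$-components of the trajectory uniformly in $y$. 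The hypothesis $S_0 = 0$, i.e.\ $d_{\dXtilde} S = O(z^{k-1})$, improves this force to $O(z^{2k-1})$, which is precisely the threshold required for the contraction to close and yields the family $\{\gamma_y\}$.

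Next I would define $\Phi(r,y) = \gamma_y(r)$ for $r > 0$ and $\Phi(0,y) = (0,y) \in \dXtilde$. Uniqueness and $C^1$ dependence on $y$ make $\Phi$ a $C^1$ diffeomorphism onto a neighbourhood of $\dXtilde$, and a generalised Gauss lemma (applicable here because the $\gamma_y$ all emanate from the single point $p$ and are parametrised by arc length) eliminates $dr\,dy$ cross terms, so $\Phi^* g = dr^2 + \tilde g_{ij}(r,y)\,dy^i dy^j$. The main obstacle is the final step of showing $\tilde g_{ij}(r,y) = r^{2k} h_{ij}(r,y)$ with $(h_{ij})$ a $C^1$ family of positive-definite metrics on $\dXtilde$ down to $r = 0$. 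Writing $\Phi^{-1}(r,y) = (z(r,y), u(r,y))$, one obtains $z = r + O(r^{k+1})$ from $\sqrt{1 + Sz^k} = 1 + O(z^k)$ along $\gamma_y$, but one must also control $\partial u/\partial y$ via the Jacobi equation of the radial geodesic family. Again $S_0 = 0$ is what does the work: it prevents a tangential drift of order $r^k$ in $u(r,y)$ that would leak into the $z^{2k} h$ term and spoil the warped-product scaling, yielding instead drifts of order $r^{k+1}$ whose contribution is $O(r^{2k+1})$ and can be absorbed into the $r$-dependence of $h_r$. Since $f(r) = r^k$ is convex, the conditions of Definition \ref{def:intro cone cups} are then satisfied.
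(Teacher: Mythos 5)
This statement is not proved in the paper at all --- it is imported from Grandjean--Grieser \cite{GrGr18}, and the paper's only indication of the argument is the one-sentence remark that, when $S_0$ vanishes, one runs the same geodesic-fibration construction as in the conical case of \cite{MeWu:Geo}, using the family of geodesics hitting $p$. Your sketch follows exactly that route (a shooting problem for geodesics with prescribed limit direction, with $S_0=0$ improving the tangential forcing from $O(z^k)$ to $O(z^{2k-1})$ so that the family exists and fibres a neighbourhood, followed by a Gauss-lemma elimination of cross terms), so it is consistent with the cited proof; the genuinely hard analytic steps --- uniqueness and $C^1$ dependence of $\gamma_y$ on $y$, and the non-degeneracy of $h_r$ down to $r=0$ --- are asserted rather than carried out, but the paper itself defers exactly these to \cite{GrGr18}.
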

For example, if $M=\R^N$ with the Euclidean metric then $S_0(z,u)=z^{k-2}|u|^2$, with $u$ restricted to  $Y_0\subset\R^{N-1}$. This shows that the additional condition on $Y$ in \eqref{eqn:cond warped cusp} is necessary, up to a constant factor.

In \cite{GrGr18} the geodesics hitting $\dXtilde$ were analysed also in the more general case where $S(z,u)=z^{k-2}\tilde S(u) + O(z^{k-1})$ with $\tilde S$ a Morse function on $\dXtilde$.
\subsection{Summary of Section \ref{sec:singularities}} \label{subsec:summary sing}
We distinguish differential and metric notions of conical/cuspidal singularities. 

The differential notion refers to subsets $X$ of a manifold $M$ with isolated singularity $p\in X$. It is based on the idea of $s$-tangency of curves, where $s$ is a profile function. 

The metric notion is based on the idea of generalized warped product metrics. It is related to the differential notion as follows. Given a Riemannian metric $g_M$ on $M$, the induced metric $g_X$ on $X$ is
\begin{itemize}
 \item conical if $X$ has a conical singularity in the differential sense
 \item cuspidal if $X$ has a cuspidal singularity in the differential sense and if it satisfies additional requirements in its relation to $g_M$, like the vanishing of $S_0$ in Theorem \ref{thm:gragri cuspidal}.
\end{itemize}
Thus, in the cuspidal case, the metric notion is more restrictive than the differential notion. 

We also mention that there are other notions of conical singularity in the literature. For example,  the notion of corner domain introduced by
Dauge in \cite{Dau:EBVPCDSAS} refers to subsets of $\R^N$ which arise from 'straight' conical spaces as in Proposition \ref{prop:warped example}(b) by local diffeomorphisms of the ambient space $\R^N$. This is more special than our notion of conical singularity given in Subsection \ref{subsubsec:conical sing}. (However, corner domains also include non-isolated singularities such as those arising from the base $Y$ having corner domain singularities itself.)

\section{Acknowledgements}
\changelocaltocdepth{1}
\subsection*{Funding}
The first author was partially supported by DFG Priority Programme 2026 ‘Geometry at Infinity’. The second author did not receive any grant for writing this article.

%
%

\end{document}